\numberwithin{equation}{section}
\def\de{\delta}
\def\eps{\varepsilon}
\def\fc{\mathcal{F}}
\def\F{\mathcal{F}}
\def\hc{\mathcal{H}}
\def\H{\mathcal{H}}
\def\la{\lambda}
\def\mh{\widehat{m}}
\def\om{\overline{m}}
\def\N{\mathbb{N}}
\def\rbb{\mathbb{R}}
\def\R{\mathbb{R}}
\def\rc{\mathcal{R}}
\def\ut{\widetilde{u}}
\def\Om{\Omega}
\def\Omt{\widetilde{\Om}}
\def\Omh{\widehat{\Om}}
\def\bal{\begin{aligned}}
\def\eal{\end{aligned}}
\def\case#1#2{\par\noindent{\underline{\it Case~#1.}}\emph{ #2}\\}
\newcounter{mt}
\def\maintheorem#1#2#3{\par \smallskip \noindent {\bf Theorem \mref{#1}}~(#2){\bf .}~{\it #3}\par}
\def\mref#1{\Alph{#1}}
\def\maintheoremdeclaration#1{\stepcounter{mt}\newcounter{#1}\setcounter{#1}{\arabic{mt}}}
\newtheorem{theorem}{Theorem}[section]
\newtheorem{lemma}[theorem]{Lemma}
\newtheorem{defin}[theorem]{Definition}
\newtheorem{prop}[theorem]{Proposition}
\begin{document}
\title{Existence of minimizers for spectral problems}
\author{Dario Mazzoleni}
\address{Dipartimento di Matematica, Universit\`a degli Studi di Pavia\\
Via Ferrata,1\\ 27100 Pavia (Italy)}
\email{dario.mazzoleni@unipv.it}
\author{Aldo Pratelli}
\address{Dipartimento di Matematica, Universit\`a degli Studi di Pavia\\
Via Ferrata,1\\ 27100 Pavia (Italy)}
\email{aldo.pratelli@unipv.it}

\begin{abstract}
In this paper we show that any increasing functional of the first $k$ eigenvalues of the Dirichlet Laplacian admits a (quasi-)open minimizer among the subsets of $\R^N$ of unit measure. In particular, there exists such a minimizer which is bounded, where the bound depends on $k$ and $N$, but not on the functional. In the meantime, we show that the ratio $\lambda_k(\Omega)/\lambda_1(\Omega)$ is uniformly bounded for sets $\Omega\in\R^N$.
\end{abstract}

\maketitle

\tableofcontents

\section{Introduction}

This paper deals with the existence of minimizers for an increasing functional of the first $k$ eigenvalues of the Dirichlet Laplacian, among (quasi-)open sets in $\rbb^N$ with unit measure.\par
This kind of spectral minimization problems is well-studied and natural in many situations: for instance, one can be interested in minimizing a certain eigenvalue, or a linear combination of eigenvalues, or a product of eigenvalues, and so on. However, despite the big interest on the question, only very little is known in the general situation when the problem is stated in the whole $\R^N$. The reason is basically the lack of compactness for generic sequences of open sets; in fact, given a sequence of sets of unit measure, it is not easy to understand whether or not it converges to a limit set in a suitable sense. The first existence result (\cite[Theorem~2.5]{BM}, see also~\cite[Th\'eor\`eme~4.7.6]{HP}), which is now classical, has been found by Buttazzo and Dal Maso in the context of bounded sets.
\begin{theorem}[Buttazzo--Dal Maso, 1993]\label{BDM}
Let $D\subseteq \R^N$ be a bounded open set, and let $\{A_n\}\subseteq D$ be a sequence of (quasi-)open sets of unit measure. Then, there exists a subsequence $\{A_{n(m)}\}$ and a (quasi-)open set $A\subseteq D$ of unit measure such that
\[
\lambda_i(A) \leq \liminf_{m\to\infty} \lambda_i (A_{n(m)})
\]
for every $i\in\N$. As an immediate consequence, for every l.s.c. functional $\F:\R^k\to\R$ increasing in each variable, there exists $A$ minimizing the value of $\F\big(\lambda_1(A),\, \lambda_2(A),\, \dots \,,\, \lambda_k(A)\big)$ among the (quasi-)open sets of unit volume contained in $D$.
\end{theorem}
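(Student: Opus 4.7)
The natural strategy is to compactify the class of quasi-open subsets of $D$ by embedding them into the larger space of capacitary (Borel) measures on $D$, and then apply the $\gamma$-compactness theorem of Dal Maso and Mosco. Concretely, to each quasi-open $A \subseteq D$ of unit measure one associates the Borel measure $\mu_A$ on $D$ that is $+\infty$ on quasi-open subsets of $D \setminus A$ and $0$ on $A$, so that $\lambda_i(A)$ coincides with the $i$-th eigenvalue of the relaxed operator $-\Delta + \mu_A$ on $H^1_0(D) \cap L^2(D, d\mu_A)$. Within this enlarged framework any sequence becomes precompact.

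The argument then splits into three steps. First, by Dal Maso--Mosco one extracts a subsequence for which $\mu_{A_{n(m)}}$ $\gamma$-converges to some capacitary measure $\mu$ on $D$, and the $\gamma$-continuity of eigenvalues gives $\lambda_i(\mu) = \lim_m \lambda_i(A_{n(m)})$ for every $i$. Second, I would recover a bona fide quasi-open set from $\mu$ by setting $A := \{w_\mu > 0\}$, where $w_\mu \in H^1_0(D)$ solves $-\Delta w_\mu + \mu w_\mu = 1$; since $\mu \geq \infty_{D \setminus A}$, monotonicity of eigenvalues with respect to capacitary measures yields $\lambda_i(A) \leq \lambda_i(\mu)$. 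Third, lower semicontinuity of Lebesgue measure along $\gamma$-convergent sequences gives $|A| \leq \liminf_m |A_{n(m)}| = 1$; if the inequality is strict, enlarge $A$ to a quasi-open set $A' \subseteq D$ of measure exactly $1$, which only decreases the eigenvalues further by domain monotonicity. Chaining the three inequalities produces $\lambda_i(A') \leq \liminf_m \lambda_i(A_{n(m)})$ for every $i$, as required.

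The assertion on the existence of a minimizer is then immediate by applying the first part to a minimizing sequence for $\F(\lambda_1,\dots,\lambda_k)$: the per-coordinate inequality just established, combined with the monotonicity and lower semicontinuity of $\F$, shows that the limit set $A'$ attains the infimum. The genuine technical obstacle is packaged into the Dal Maso--Mosco theorem, namely the $\gamma$-compactness of the class of capacitary measures together with the continuity of the associated spectra; once this is taken as a black box, the remaining work is an elementary interplay between monotonicity of eigenvalues and lower semicontinuity of Lebesgue measure.
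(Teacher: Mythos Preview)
The paper does not prove this theorem; it is quoted as a classical result of Buttazzo and Dal Maso, with references to~\cite{BM} and~\cite{HP}, and is used only as a black box in the derivation of Theorem~\mref{main} from Proposition~\ref{allhere}. Your outline is essentially the standard argument from the original source and the subsequent literature (notably the Bucur--Buttazzo monograph~\cite{BB}): relax the class of quasi-open sets into capacitary measures, invoke Dal Maso--Mosco $\gamma$-compactness and the $\gamma$-continuity of the spectrum, recover a quasi-open set as the regular set $\{w_\mu>0\}$ of the limit measure, and correct the volume by monotone enlargement. So there is nothing in the present paper to compare against, but your sketch accurately reproduces the received proof.
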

Thanks to a concentration-compactness argument (see for instance~\cite[Theorem~5.3.1]{H}), it is now well-known that the boundedness assumption ($A_n \subseteq D$ for every $n$) in the theorem above can be replaced by the following slightly weaker assumption: for every $\eps>0$ there exists $R=R(\eps)>0$ such that every set $A_n$ is contained in a cube of side $R$ up to a volume at most $\eps$.\par

The results described above make it simple to study spectral minimization problems inside an ambient space which is essentially bounded. On the other hand, very little is known concerning general problems in $\R^N$, because minimizing sequences, in principle, could have a significant portion of volume moving at infinity. More precisely, the existence of a minimizer for $\lambda_1$ and $\lambda_2$ is clear, since it is well-known that a ball of unit volume minimizes $\lambda_1$, while two disjoint balls of half volume minimize $\lambda_2$: these two classical results are usually referred to as Faber--Krahn inequality and Krahn--Szeg\"o inequality respectively. Instead, the existence of a minimizer for $\lambda_3$ has been proved only in 2000 by Bucur and Henrot (see~\cite{BH}), but it is still presently not known which set is the minimizer (and this is a major open problem). For other open problems and partial results see~\cite{Hart,MiMe}.\par

In this paper, we show that every increasing functional of the first $k$ eigenvalues admits a minimizer, which is in turn a bounded set. In other words, we prove that the Buttazzo--Dal Maso Theorem~\ref{BDM} is true in the whole $\R^N$, with no need of the \emph{a priori} boundedness assumption.

\maintheorem{main}{Existence of bounded minimizers}
{Let $k\in\N$, and let $\fc\colon \R^k\rightarrow\R$ be a l.s.c functional, increasing in each variable. Then there exists a bounded minimizer for the problem
\begin{equation}\label{problema}
\inf\Big\{\fc\big(\lambda_1(A),\, \lambda_2(A),\, \dots \,,\, \lambda_k(A)\big): A\subseteq \R^N,\, |A|=1 \Big\}
\end{equation}
among the (quasi-)open sets. More precisely, a minimizer $A$ is contained in a cube of side $R$, where $R$ depends on $k$ and on $N$, but \emph{not} on the particular functional $\F$.}

The strategy of the proof consists in taking a generic open set $\Omega\subseteq\R^N$ of unit volume, and showing that there exists a modified open set $\Omh$ which is uniformly bounded and which has all the first $k$ eigenvalues lower than those of $\Omega$. Roughly speaking, the basic idea why this works is that, if a set of unit volume has huge diameter, then there must be some very thin sections. And in turn, this is against the smallness of the Rayleigh quotients of the eigenfunctions, since by definition they vanish on the boundary. As an immediate consequence of the above boundedness claim, there will clearly exist a uniformly bounded minimizing sequence, and therefore Theorem~\mref{main} will follow by Theorem~\ref{BDM}.\par
During our construction, we will need the following result, simple but of indipendent interest.
\maintheorem{bounded}{Boundedness of the ratio $\lambda_k/\lambda_1$}{
There exists a constant $M=M(k,N)$ such that for every quasi-open set $A$ one has $\lambda_k(A) \leq M \lambda_1(A)$.}
It is interesting to note that, in the particular case when $k=2$, Theorem~\mref{bounded} is a consequence of the stronger result by Ashbaugh and Benguria~\cite{AB}, which states that not only the ratio $\lambda_2/\lambda_1$ is bounded, but also that the balls maximize this ratio. Notice that one cannot apply Theorem~\mref{main} to study this problem, since the functional $\lambda_k/\lambda_1$ is not increasing in the first variable. Notice also that Theorem~\mref{bounded} is stated for sets of any volume; in fact, since by trivial rescaling one finds that for every $\alpha>0$ and every $i\in\N$ one has
\begin{equation}\label{rescaling}
\lambda_i(\alpha A) = \alpha^{-2} \lambda_i(A)\,,
\end{equation}
the ratio $\lambda_k/\lambda_1$ does not change by rescaling.\par

The plan of the paper is the following. In Section~\ref{ssec2} below we give a brief list of the notation that will be used through the paper, and some preliminaries. Then, in Section~\ref{sec2} we show Theorem~\mref{main}, while in Section~\ref{sec3} we show Theorem~\mref{bounded}.\par\smallskip

\emph{{\bf \emph{NOTE:}} Dorin Bucur recently announced the article in preparation~\cite{Bucur}, where he gives a proof of the existence of a minimizer for the $k$-th eigenvalue of the Dirichlet Laplacian in $\R^N$ (which corresponds, in the language of our Theorem~\mref{main}, to the particular case when $\F$ is the projection on the last variable).}

\subsection{Notations and preliminaries\label{ssec2}}

Throughout this paper, the ambient space will be $\R^N$ for some given $N\geq 2$. The generic point of $\R^N$ will be denoted by $z\equiv (x,y)\in \R\times\R^{N-1}$, or sometimes as $z\equiv (z_1,\, z_2,\, \dots\, ,\, z_N)$, while the generic open set will be $\Omega\subseteq \R^N$. The eigenvalues of the Dirichlet Laplacian on $\Omega$ will be denoted by $\lambda_i,\, i\in\N$, while $\{u_i\}\subseteq W^{1,2}_0(\Omega)$ will be a corresponding sequence of orthogonal eigenfunctions, always normalized to have unit $L^2$ norm. For any function $v:\Omega\to\R$ and any set $D\subseteq \Omega$, we will consider the Rayleigh quotient
\[
\rc(v, D) :=  \frac{\int_D |Dv|^2}{\int_D v^2}\,;
\]
hence, in particular $\rc(u_i,\Omega)=\lambda_i(\Omega)$ for every $i\in\N$. The following is a very useful characterization of the eigenvalues, which we will need later, and whose proof can be found for instance in~\cite{H}. It is often referred to as \emph{min-max principle}.
\begin{theorem}\label{chareigen}
Let $\Omega\subseteq \R^N$ be an open set. Then for every $j\in\N$ one has
\[
\lambda_j = \min \bigg\{ \max\Big\{ \rc(w, \Omega),\, w\in K_j\setminus\{0\}\Big\}\bigg\}\,,
\]
where the minimum is taken among all the $j-$dimensional subspaces $K_j$ of $W^{1,2}_0(\Omega)$.
\end{theorem}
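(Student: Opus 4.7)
The plan is to prove the min-max characterization by establishing two matching inequalities, one by exhibiting a specific optimal $j$-dimensional subspace, and the other by a dimension-counting argument combined with the spectral expansion of the eigenfunctions.

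First I would collect the two key facts about the eigenfunctions $\{u_i\}$ that the proof rests on: (i) they are orthonormal in $L^2(\Omega)$ (by hypothesis) and, using the equation $-\Delta u_i = \lambda_i u_i$ with zero boundary conditions together with integration by parts, they satisfy $\int_\Omega Du_i \cdot Du_k = \lambda_i \delta_{ik}$; and (ii) since $\Omega$ has finite measure, the embedding $W^{1,2}_0(\Omega) \hookrightarrow L^2(\Omega)$ is compact, so the resolvent of the Dirichlet Laplacian is compact and $\{u_i\}$ forms a complete orthonormal basis of $L^2(\Omega)$.

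For the upper bound (that the infimum is at most $\lambda_j$), I would take the concrete test subspace $K_j = \mathrm{span}(u_1,\dots,u_j)$. Any $w = \sum_{i=1}^j c_i u_i \in K_j\setminus\{0\}$ satisfies, by the two orthogonality relations above,
\[
\rc(w,\Omega) = \frac{\sum_{i=1}^j c_i^2 \lambda_i}{\sum_{i=1}^j c_i^2} \leq \lambda_j\,,
\]
with equality when $w=u_j$. Hence the maximum over $K_j$ is exactly $\lambda_j$, giving the desired upper bound and also showing that the outer minimum is attained.

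For the lower bound, I would fix an arbitrary $j$-dimensional subspace $K_j\subseteq W^{1,2}_0(\Omega)$ and produce some $w\in K_j\setminus\{0\}$ with $\rc(w,\Omega)\geq \lambda_j$. The idea is to impose $j-1$ linear constraints on $K_j$: namely, I look for $w\in K_j$ which is $L^2$-orthogonal to each of $u_1,\dots,u_{j-1}$. Since $K_j$ has dimension $j$ and we impose $j-1$ linear conditions, by linear algebra there exists a nonzero such $w$. Expanding $w = \sum_{i\geq j} c_i u_i$ in the $L^2$-basis, one obtains $\int_\Omega w^2 = \sum_{i\geq j} c_i^2$ and, by the Dirichlet orthogonality, $\int_\Omega |Dw|^2 = \sum_{i\geq j} c_i^2 \lambda_i \geq \lambda_j \sum_{i\geq j} c_i^2$, whence $\rc(w,\Omega)\geq \lambda_j$. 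Combining the two inequalities yields the theorem.

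The main delicate point is the second step: one must verify that $w\in K_j$ actually lies in $W^{1,2}_0(\Omega)$ (which is free, since $K_j\subseteq W^{1,2}_0$) and that the spectral expansion of $w$ converges in $W^{1,2}_0$ and not merely in $L^2$, so that the computation of $\int |Dw|^2$ via the Dirichlet orthogonality is justified; this requires observing that $\sum c_i^2 \lambda_i < \infty$, which in turn follows from the fact that $w\in W^{1,2}_0(\Omega)$ together with Parseval-type identities relative to the Dirichlet inner product on the basis $\{u_i/\sqrt{\lambda_i}\}$.
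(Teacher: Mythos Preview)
Your proof is correct and is the standard argument for the Courant--Fischer min-max principle. Note that the paper does not actually give its own proof of this theorem: it states the result and refers to~\cite{H} for a proof. So there is no ``paper's proof'' to compare against; you have supplied what the paper omits.

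Two small remarks. First, the theorem as stated does not assume $|\Omega|<\infty$, but you (correctly) invoke finite measure to get compactness of the embedding $W^{1,2}_0(\Omega)\hookrightarrow L^2(\Omega)$ and hence the existence of a complete eigenbasis; this hypothesis is implicit throughout the paper, where all competitor sets have unit volume, so it is harmless to add it. Second, your handling of the delicate point is right: the cleanest way to justify $\int_\Omega |Dw|^2=\sum_i \lambda_i c_i^2$ is exactly the Parseval identity you outline, using that $\{u_i/\sqrt{\lambda_i}\}$ is a complete orthonormal system for $W^{1,2}_0(\Omega)$ equipped with the Dirichlet inner product (completeness follows because Dirichlet-orthogonality to every $u_i$ forces $L^2$-orthogonality to every $u_i$, hence vanishing by the $L^2$-completeness of the eigenbasis).
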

In our result, we need to use quasi-open sets. However, the construction of this paper only deals with open sets, and actually no knowledge of quasi-open sets is needed; therefore, we only recall here the very basic definition, addressing the interested reader for instance to~\cite{BB,HP} for a complete tractation.
\begin{defin}
Let $\Omega$ be an open set, and $V\subset\subset \Omega$ a compactly supported subset. The \emph{capacity of $V$ in $\Omega$} is defined as
\[
{\rm cap}_\Omega(V) := \inf\bigg\{ \int_\Omega |D\varphi|^2 :\, \varphi \in {\rm C}^\infty_0(\Omega),\, \varphi\geq 1 \ {\rm on}\ V\bigg\}\,.
\]
Let then $A\subseteq \R^N$ be a bounded set, and let $\Omega$ be an open set such that $A\subset\subset \Omega$. The set $A$ is said \emph{quasi-open} if for every $\eps>0$ there exists an open set $A\subseteq \Omega_\eps \subset\subset \Omega$ such that ${\rm cap}_\Omega ( \Omega_\eps \setminus A ) < \eps$. This definition does \emph{not} depend on the choice of $\Omega$. Finally, a generic set $A\subseteq \R^N$ is said \emph{quasi-open} if the intersection of $A$ with any ball of $\R^N$ is a quasi-open bounded set.
\end{defin}
It is well-known that the notion of eigenvalues can be extended to the realm of quasi-open sets. An important feature of the quasi-open sets is given by the Theorem~\ref{BDM} by Buttazzo and Dal Maso; namely, that any bounded sequence of open (or quasi-open) sets converges --up to a subsequence-- to a quasi-open set, in the sense that all the eigenvalues converge.\par

\begin{figure}[htbp]
\begin{center}
\input{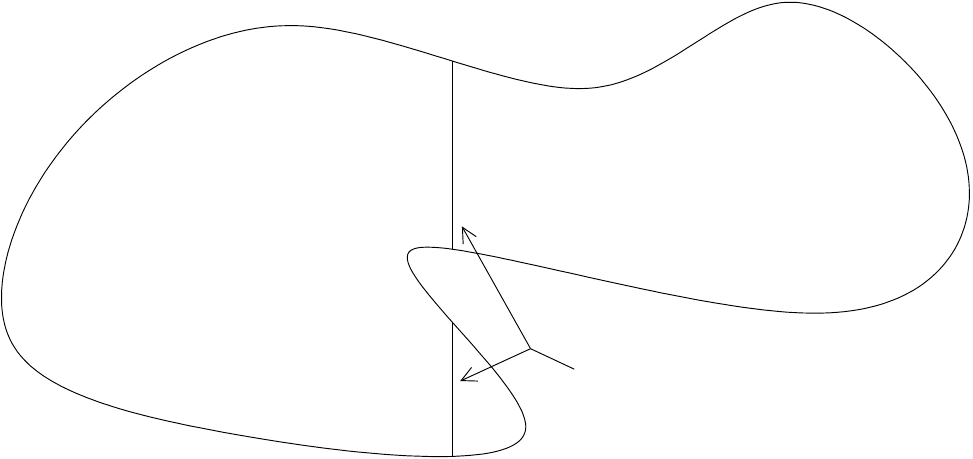_t}
\caption{A set $\Omega$ and the corresponding sets $\Omega^l_t,\, \Omega^r_t$ and $\Omega_t$.}
\label{leftright}
\end{center}
\end{figure}

Through the paper, $k\in\N$ will be a fixed integer, and $\F:\R^k\to \R$ will be a l.s.c. function, increasing in each variable. For the sake of brevity, we will often write $\F(\Omega)$ in place of $\F\big(\lambda_1(\Omega),\, \lambda_2(\Omega),\, \dots \,,\, \lambda_k(\Omega)\big)$; hence, the goal of this paper can be rephrased by saying that we seek for a quasi-open set $A$ minimizing $\F(A)$. By density, it is clear that the infimum of $\F$ among the quasi-open sets equals the infimum among the open (or even smooth) sets.\par

The letter $C$ will be always used to denote a big geometric constant, possibly increasing from line to line; the constant $C$ will always depend \emph{only} on $N$ and on $k$ (sometimes, possibly also on some constant $K$, which in turn will eventually be chosen only depending on $N$ and $k$), thus \emph{not} on the particular choice of $\F$, and \emph{not} on the set $\Omega$. Sometimes, we will label the constants in our results as $C_1,\, C_2,\, C_3\dots$ for successive reference.\par
For any $t\in \R$, we will define
\begin{align*}
\Om^l_t:=\Big\{(x,y)\in\Om : x<t\Big\} \,, &&
\Om_t:=\Big\{y\in\rbb^{N-1} : (t,y)\in\Om\Big\}\,, &&
\Om^r_t:=\Big\{(x,y)\in\Om : x>t\Big\}\,;
\end{align*}
notice that $\Omega_t^l$ and $\Omega_t^r$ are subsets of $\R^N$, while $\Omega_t$ is a subset of $\R^{N-1}$. Figure~\ref{leftright} shows an example of a generic set $\Omega$ with $\Omega_t^l,\, \Omega_t^r$ and $\Omega_t$. On the other hand, given $0\leq m \leq |\Om|$ and $0\leq m_1 \leq m_2 \leq |\Omega|$, we define the level $\tau(\Omega,m)\in \overline \R$ and the width $W(\Omega,m_1,m_2)$ as
\begin{align*}
\tau(\Omega,m):= \inf\Big\{ t \in \R:\, \big| \Omega^l_t\big| \geq m \Big\}\,, &&
W(\Omega,m_1,m_2):= \tau(\Omega,m_2)-\tau(\Omega,m_1)\,.
\end{align*}
Observe that one surely has $-\infty<\tau(\Omega,m)<+\infty$ whenever $0<m<|\Omega|$, as well as $W(\Omega,m_1,m_2)<+\infty$ if $0<m_1\leq m_2 < |\Omega|$.\par
Finally, given any set $\Omega\subseteq\R^N$, we define its $1$-dimensional projections for $1\leq p \leq N$ as
\[
\pi_p(\Omega) := \Big\{ t\in\R:\, \exists \, (z_1,\, z_2,\, \dots\, ,\, z_N)\in\Omega,\, z_p = t\Big\}\,.
\]

\section{Proof of Theorem~\mref{main}}\label{sec2}

In this section we present the proof of Theorem~\mref{main}. As already anticipated in the introduction, our strategy basically consists in showing that to minimize $\F$ it is enough to concentrate on uniformly bounded sets. More precisely, we will show the following result.

\begin{prop}\label{allhere}
For every $K>0$ there exists a constant $R=R(k,K,N)$, such that the following holds. If $\Omega\subseteq \R^N$ is an open set of unit volume and with $\lambda_k(\Omega)\leq K$, there exists another open set $\Omh$, still of unit volume but contained in a cube of side $R$, and with $\lambda_i(\Omh)\leq \lambda_i(\Om)$ for every $1\leq i \leq k$.
\end{prop}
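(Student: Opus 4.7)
The plan is to reconstruct $\Omega$ surgically: whenever $\Omega$ is too long in some coordinate direction, cut at a cross-section that is thin in $(N-1)$-measure and carries little $L^2$ mass of the first $k$ eigenfunctions, then translate the resulting pieces to fit disjointly in a bounded box. Iterating across directions gives a bounded set, and the min-max principle (Theorem~\ref{chareigen}), applied to cutoff versions of $u_1,\ldots,u_k$ as test functions on the rearranged set, provides the control on the eigenvalues.

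Two quantitative facts are the backbone. First, a slice-wise Faber--Krahn estimate shows $\lambda_1(\Omega) \ge \min_t \lambda_1(\Omega_t) \ge c(N)\bigl(\sup_t |\Omega_t|\bigr)^{-2/(N-1)}$, which combined with $\lambda_k(\Omega) \le K$ yields a threshold $\mu_0 = \mu_0(K,N)>0$ below which a slice is ``thin'' and above which at least one slice must exist. Together with $\int_{\R}|\Omega_t|\,dt = 1$, the thick set $F_{\mu_0} := \{t : |\Omega_t| \ge \mu_0\}$ has $|F_{\mu_0}| \le 1/\mu_0$. Second, by Fubini each slice-mass $\rho_i(t) := \int_{\Omega_t} u_i(t,y)^2 \, dy$ satisfies $\int\rho_i\,dt = 1$, so by Markov $|\{t : \rho_i(t) > \eta\}| \le 1/\eta$; a union bound over $i \le k$ together with a further averaging on $\delta$-neighbourhoods ensures that, as soon as $\pi_p(\Omega)$ is longer than a constant depending only on $\mu_0, \eta, \delta, k$, there is a cut level $t^\ast$ that is thin and for which $\int_{|x-t^\ast|<\delta} u_i^2 \le C\delta\eta$ for every $i \le k$.

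With such a good $t^\ast$ in hand I build the intermediate target $\widehat\Omega_\varepsilon$ by cutting $\Omega$ at a finite number of good levels (whose number is bounded in terms of $k,K,N$ alone) in each coordinate direction, and translating the resulting pieces so as to fit disjointly in a bounded box. A Lipschitz one-dimensional cutoff $\eta_\delta(x)$ supported near each cut yields modified test functions $\tilde u_i \in W^{1,2}_0(\widehat\Omega_\varepsilon)$ whose Rayleigh quotients obey, by the usual calculation,
\[
\int |D(u_i\eta_\delta)|^2 \le \int \eta_\delta^2 |Du_i|^2 + C\delta^{-2}\int_{|x-t^\ast|<\delta} u_i^2, \qquad \int (u_i\eta_\delta)^2 \ge 1 - \int_{|x-t^\ast|<\delta} u_i^2.
\]
Feeding the good-cut bound into these estimates and applying the min-max principle to the span of $\tilde u_1,\ldots,\tilde u_k$ gives $\lambda_i(\widehat\Omega_\varepsilon) \le \lambda_i(\Omega) + \varepsilon$ for every $i\le k$, after balancing $\eta$ and $\delta$ in terms of $\varepsilon$.

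The main obstacle I anticipate is making the bounding cube side $R$ honestly independent of $\varepsilon$: although the cutoff parameters $\delta,\eta$ shrink with $\varepsilon$, the thin-slice threshold $\mu_0$, the bound $|F_{\mu_0}| \le 1/\mu_0$, and the number and position of the cuts must be determined purely by $k,K,N$; only then does the surgery leave a definite room to gain in the total width $W(\Omega,m_1,m_2)$ at each iteration. Once this is secured, one concludes by taking $\varepsilon_n \to 0$ and appealing to the Buttazzo--Dal Maso Theorem~\ref{BDM}: all $\widehat\Omega_{\varepsilon_n}$ live in a common cube of side $R$, so along a subsequence they converge to a quasi-open $\widehat\Omega$ of unit volume, and $\lambda_i(\widehat\Omega) \le \liminf_n \lambda_i(\widehat\Omega_{\varepsilon_n}) \le \lambda_i(\Omega)$ for every $i \le k$, as required.
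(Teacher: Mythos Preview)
Your proposal has two gaps, one of which you flag yourself. The first is the ``main obstacle'' you name: making $R$ independent of $\varepsilon$ while keeping the number of cuts bounded purely in terms of $k,K,N$. Consider $\Omega=B\cup T$ with $B$ a ball of volume $\tfrac12$ and $T=(0,L)\times D$ a disjoint tube with $|D|=\tfrac1{2L}$. Then $\lambda_k(\Omega)\le\lambda_k(B)$ is bounded independently of $L$, yet any partition of $T$ into fewer than $L/R$ pieces leaves one of $x_1$-extent exceeding $R$, and translation cannot shrink an individual piece. So either the number of cuts is unbounded, or $R$ depends on $\Omega$. One might hope that cuts through very thin regions are ``free'' because the first $k$ eigenfunctions carry negligible mass there, but your write-up does not separate free cuts from costly ones, and the packing of an unbounded number of pieces of uncontrolled shape into a fixed cube is nowhere addressed.

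The second gap is that even if $\widehat\Omega_\varepsilon$ can be built inside a uniform cube, the limit via Theorem~\ref{BDM} produces only a \emph{quasi}-open set, whereas Proposition~\ref{allhere} asserts an open $\widehat\Omega$.

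The paper's surgery is of a different kind and avoids both issues. Rather than translating pieces, it \emph{discards} $\Omega^-(t)$ beyond a good section, caps the opening with the small cylinder $Q(t)$ of~(\ref{defcyl}), and rescales back to unit volume. The key observation (Lemma~\ref{threeconditions}, inequality~(\ref{muchhere})) is that whenever the removed mass $m(t)$ dominates the cap volume $\varepsilon(t)^{N/(N-1)}$, the rescaling \emph{strictly decreases} every $\lambda_i$, and by a definite amount $\eta>0$ once $m(t)\ge\widehat m$. This strict gain caps the number of iterations at $K/\eta$ (Case~IIb in the proof of Lemma~\ref{lemmatail}) and delivers an open $\widehat\Omega$ with $\lambda_i(\widehat\Omega)\le\lambda_i(\Omega)$ directly, with no $\varepsilon\to0$ limit required.
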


Let us immediately see how Theorem~\mref{main} follows from this proposition, then the rest of the section will be devoted to show the proposition.

\begin{proof}[Proof of Theorem~\mref{main}]
Let us take a minimizing sequence of open sets $\{\Omega_n\}$ for problem~(\ref{problema}). Fix a generic $n\in\N$, and assume for a moment that $\lambda_k(\Omega_n)\geq M\lambda_k(B)$, being $B$ the ball of unit volume and $M$ the constant of Theorem~\mref{bounded}. If it is so, then by Theorem~\mref{bounded} one has $\lambda_1(\Omega_n)\geq \lambda_k(B)$, thus for every $1\leq i \leq k$ it is $\lambda_i(\Omega_n)\geq \lambda_i(B)$, hence $\F(\Omega_n)\geq\F(B)$, being $\F$ increasing in each variable. Thanks to this observation, it is admissible to assume that $\lambda_k(\Omega_n)\leq K := M \lambda_k(B)$ for every $n$. By Proposition~\ref{allhere}, then, there exists another sequence $\{\Omh_n\}$, made by open sets of unit volume contained in a cube of side $R$, with $\lambda_i(\Omh_n)\leq \lambda_i(\Om_n)$ for every $1\leq i \leq k$ and every $n\in\N$. Again by the assumption that $\F$ is increasing in each variable, we derive that also $\{\Omh_n\}$ is a minimizing sequence for~(\ref{problema}).\par
We can then apply Theorem~\ref{BDM} to find a quasi-open set $A$, still contained in the cube of side $R$, and such that, up to extract a subsequence of $\{\Omh_n\}$, one has $\la_i(A)\leq \liminf_n \lambda_i(\Omh_n)$ for every $1\leq i \leq k$. By the lower semi-continuity of $\F$, we derive that $A$ is a minimizer for $\F$, thus the proof is concluded.
\end{proof}

The rest of this section is devoted to show Proposition~\ref{allhere}. For the ease of presentation, we divide the construction in three subsections. In the first one we obtain the boundedness of the ``tails'' (Lemma~\ref{lemmatail}), while in the second one we consider the internal part (Lemma~\ref{lemmainterior}). Then, in the last subsection we put everything together to give the proof of Proposition~\ref{allhere}.

\subsection{Boundedness of the tails\label{sub1}}

This subsection is devoted to show that, under the assumptions of Proposition~\ref{allhere}, we can reduce to the case when the ``tails'' of $\Omega$ are bounded. More precisely, we fix once for all a small positive number $\mh=\mh(K,N)\in (0,1/4)$ in such a way that
\begin{equation}\label{defmh}
\frac{(4\mh)^{\frac 2N}}{\lambda_1(B_N)} \, K \leq \frac 12\,,
\end{equation}
being $B_N$ the ball of unit volume in $\R^N$. We aim to show the following result.

\begin{lemma}\label{lemmatail}
For every $K>0$ there exist $R_1=R_1(k,K,N)$ and $\Gamma_1=\Gamma_1(k,K,N)$ such that, for any open set $\Omega\subseteq \R^N$ of unit volume and with $\lambda_k(\Omega)\leq K$, there exists another open set $\Omh\subseteq\R^N$, still of unit volume, such that $\lambda_i(\Omh)\leq \lambda_i(\Om)$ for every $1\leq i \leq k$, and that for every $2\leq p \leq N$
\begin{gather}
W\big(\Omh,0,\mh\big) \leq R_1\,, \label{estimatetail} \\
W\big(\Omh,\mh,1\big) \leq \Gamma_1 \Big( W\big(\Om,\mh,1\big)\Big)\,,
\qquad {\rm diam} \big(\pi_p(\Omh)\big)\leq \Gamma_1\, {\rm diam} \big(\pi_p(\Omega)\big)\,. \label{estimatediam}
\end{gather}
\end{lemma}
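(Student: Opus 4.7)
The strategy is to find a cross-section $\Om_{t^*}$ deep inside the left tail of $\Om$ that is so thin that cutting $\Om$ along $\{x=t^*\}$ barely affects the first $k$ eigenvalues, and then to set $\Omh:=\alpha\,\Om^r_{t^*}$ with $\alpha:=(1-v)^{-1/N}$ and $v:=|\Om^l_{t^*}|\le\mh$, so that the rescaling factor $\alpha$ produces just enough slack to absorb the tiny eigenvalue increase caused by the cut.

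The key technical tool is a slicing inequality: for every $w\in W^{1,2}_0(\Om)$ and a.e.\ $t\in\R$, Poincar\'e on $\Om_t$ combined with Faber--Krahn in $\R^{N-1}$ yields
\[
\int_{\Om_t} w(t,y)^2\,dy \le \frac{|\Om_t|^{2/(N-1)}}{\lambda_1(B_{N-1})}\int_{\Om_t}|\nabla_y w|^2\,dy,
\]
so any $w\in V:=\mathrm{span}(u_1,\dots,u_k)$ carries little $L^2$-mass over strips where sections are small. Since $\int_{-\infty}^{\tau(\Om,\mh)}|\Om_t|\,dt=\mh$, averaging on $[\tau(\Om,\mh)-R_1,\tau(\Om,\mh)]$ (the estimate~\eqref{estimatetail} being trivial if $W(\Om,0,\mh)\le R_1$ already) produces a level $t^*$ together with a short strip $(t^*,t^*+\delta)$ on which $|\Om_t|$ is as small as needed.

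To prove $\lambda_i(\Omh)\le\lambda_i(\Om)$ for $i\le k$, I would apply the min--max principle (Theorem~\ref{chareigen}) to the trial subspace $\{\chi w:w\in V_i\}\subseteq W^{1,2}_0(\Om^r_{t^*})$, where $V_i=\mathrm{span}(u_1,\dots,u_i)$ and $\chi$ is a linear cutoff with $\chi(t^*)=0$ and $\chi(t^*+\delta)=1$. The Leibniz estimate
\[
\int|\nabla(\chi w)|^2 \le (1+\eps)\int_{\Om^r_{t^*}}|\nabla w|^2+(1+\eps^{-1})\delta^{-2}\int_{t^*<x<t^*+\delta}w^2,
\]
combined with the slicing inequality (used both for the right-most integral and for the $L^2$-mass of $w$ in $\{x<t^*+\delta\}$ in the denominator $\int(\chi w)^2$), controls $\rc(\chi w,\Om^r_{t^*})$. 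The map $w\mapsto\chi w$ is injective on $V_i$: if $\chi w\equiv 0$ then $w$ is supported in $\{x<t^*+\delta\}$, which has volume $\le 4\mh$ by the choice of the strip, and Faber--Krahn forces $\rc(w,\Om)\ge\lambda_1(B_N)/(4\mh)^{2/N}\ge 2K$ by~\eqref{defmh}, contradicting $\rc(w,\Om)\le\lambda_k\le K$. Balancing $\eps$, $\delta$ and $v$ gives $\rc(\chi w,\Om^r_{t^*})\le(1-v)^{-2/N}\lambda_i(\Om)$, and multiplication by $\alpha^{-2}=(1-v)^{2/N}$ yields $\lambda_i(\Omh)\le\lambda_i(\Om)$ exactly.

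The geometric bounds follow directly from $\Omh=\alpha\Om^r_{t^*}$ with $\alpha\le(1-\mh)^{-1/N}=:\Gamma_1$: for $p\ge 2$, $\mathrm{diam}(\pi_p(\Omh))=\alpha\,\mathrm{diam}(\pi_p(\Om^r_{t^*}))\le\Gamma_1\,\mathrm{diam}(\pi_p(\Om))$; and since $v+\mh(1-v)\ge\mh$, one has $W(\Omh,\mh,1)=\alpha\,W(\Om,v+\mh(1-v),1)\le\Gamma_1\,W(\Om,\mh,1)$. The uniform bound~\eqref{estimatetail} on $W(\Omh,0,\mh)$ is the most delicate point: a careful selection of $t^*$ (exploiting that $|\Om_t|$ is small on the entire strip, not only at $t^*$, so that $\tau(\Om,v+\mh(1-v))-t^*$ stays comparable to $R_1$) should produce the bound in one shot, but if not the construction can be iterated on $\Omh$ in place of $\Om$, terminating after finitely many steps with bounded total rescaling since each iteration strictly advances the cut inside the original set. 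I expect the main obstacle to lie precisely in the painless-cut estimate above, where the truncation cost $\delta^{-2}\int_{t^*<x<t^*+\delta} w^2$ must remain strictly below the rescaling slack $(1-v)^{-2/N}-1$: this quantitative balance is what the choice~\eqref{defmh} of $\mh$ is tailored to enable.
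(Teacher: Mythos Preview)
Your overall strategy matches the paper's: cut the tail at a well-chosen section, build test functions from the eigenfunctions via a linear cutoff, apply the min--max principle, rescale to unit volume, and iterate if necessary (the paper attaches a small cylinder to $\Om^+(t)$ instead of cutting off inside, but the effect is the same). The step you label ``balancing $\eps$, $\delta$ and $v$'' is, however, exactly the heart of the argument, and simple averaging on $[\tau(\Om,\mh)-R_1,\tau(\Om,\mh)]$ does not deliver it. After optimizing the cutoff width one finds that the truncation cost is of order $\eps(t^*)^{1/(N-1)}\de(t^*)$ (with $\de(t)=\sum_i\int_{\Om_t}|Du_i|^2$), while the rescaling slack is of order $v=m(t^*)$; hence you need a level at which $m(t^*)$ dominates $\eps(t^*)^{1/(N-1)}\de(t^*)$. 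Averaging only makes $\eps(t^*)$ and $\de(t^*)$ small in absolute terms, not small relative to $m(t^*)$, and nothing in your argument bounds $m(t^*)$ from below: nothing prevents $m$ from being essentially zero on most of your interval and then reaching $\mh$ on a short final piece where sections are not small. The paper resolves this with a trichotomy (Lemma~\ref{threeconditions}): at every $t$ either $\max\{\eps,\de\}>\nu$, or $m\le C(\eps+\de)\eps^{1/(N-1)}$, or the cut at $t$ is profitable. On the set where the second alternative holds one has the differential inequality $m'=\eps\ge c\,m^{(N-1)/N}$ (and an analogous one with $\phi'=\de$), which forces that set to have length at most $C\mh^{1/N}$; combined with the trivial bound on the first alternative, this shows that if the tail is longer than a fixed $C_5$ then a profitable level must exist. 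This ODE-type bound on the ``bad'' set is the missing ingredient, and the choice~(\ref{defmh}) of $\mh$ is not what provides it (that choice only makes the denominator stay above $1/2$).

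Two secondary gaps. First, the slicing inequality alone does not control $\int_{\{x<t^*+\delta\}}w^2$ by data at the single section $t^*$, since sections further to the left may be large; the paper obtains this via reflection across $\{x=t\}$ together with Faber--Krahn in $\R^N$ and the eigenfunction equation (Lemma~\ref{primastima}). You invoke this circle of ideas for injectivity of $w\mapsto\chi w$, but it is needed quantitatively as well. Second, ``each iteration strictly advances the cut inside the original set'' does not bound the number of iterations nor the cumulative rescaling, hence not $\Gamma_1$. The paper's mechanism is different: whenever the profitable cut has $m(t^*)\ge\mh$, all first $k$ eigenvalues drop by a fixed amount $\eta>0$, so at most $K/\eta$ iterations can occur, which is what makes $\Gamma_1=2^{[K/\eta]+1}$ legitimate.
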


The claim of the lemma, roughly speaking, says that it is always possible to assume that the ``tail'' of $\Omega$, i.e., the set $\Omega^l_{\tau(\Omega,\mh)}$ of volume $\mh$, has horizontal projection of length at most $R_1$. More precisely, condition~(\ref{estimatetail}) says that one can modify $\Omega$ in such a way that the tail is uniformly horizontally bounded, while condition~(\ref{estimatediam}) says that this modification does not excessively worsen the remaining part of the set $\Omega$, nor its extension in the $N-1$ non-horizontal directions.\par

To prove the lemma, we start setting for brevity $\bar t =\tau(\Omega,2\mh)$, and for every $t\leq \bar t$ we define
\begin{align}\label{int0}
\Omega^+(t) := \Omega^r_t\,, && \Omega^-(t) := \Omega^l_t\,, && \eps(t):=\hc^{N-1}(\Om_t)\,.
\end{align}
Observe that
\begin{equation}\label{int1}
m(t) := \big| \Omega^-(t) \big| = \int_{-\infty}^t \eps(s)\,ds\leq 2\mh\,.
\end{equation}
Moreover, we let as usual $\big\{u_1,\, u_2 ,\, \dots \,,\, u_k\big\}$ be an orthonormal set of eigenfunctions with unit $L^2$ norm and corresponding to the first $k$ eigenvalues of $\Omega$. We define then also, for every $1\leq i \leq k$ and every $t\leq \bar t$,
\begin{align}\label{int2}
\de_i(t):=\int_{\Om_t}{|Du_i(t,y)|^2\,d\hc^{N-1}(y)}\,, && \mu_i(t):=\int_{\Om_t}{u_i(t,y)^2\,d\hc^{N-1}(y)}\,,
\end{align}
which makes sense since every $u_i$ is smooth. It is convenient to give the further notation
\[
\delta(t) := \sum_{i=1}^k \delta_i(t) = \sum_{i=1}^k \int_{\Omega_t} \big|Du_i(t,y)\big|^2\, d\H^{N-1}(y)\,,
\]
and in analogy with~(\ref{int1}) we also set
\begin{equation}\label{defphi}
\phi(t) := \sum_{i=1}^k \int_{\Omega^-(t)}  |Du_i|^2 = \int_{-\infty}^t \delta(s)\,ds \,.
\end{equation}
Applying the Faber--Krahn inequality in $\R^{N-1}$ to the set $\Omega_t$, and using~(\ref{rescaling}) on $\R^{N-1}$, we know that
\[
\eps(t)^{\frac{2}{N-1}} \lambda_1(\Omega_t)=\H^{N-1}(\Omega_t)^{\frac{2}{N-1}} \lambda_1(\Omega_t) \geq \lambda_1(B_{N-1})\,,
\]
calling $B_{N-1}$ the unit ball in $\R^{N-1}$. As a trivial consequence, we can estimate $\mu_i$ in terms of $\eps$ and $\de_i$: in fact, noticing that $u_i(t,\cdot)\in W^{1,2}_0(\Omega_t)$ and writing $Du_i = (D_1 u_i, D_y u_i)$, we have
\begin{equation}\label{muest}
\mu_i(t)=\int_{\Om_t}{u_i(t,\cdot)^2\,d\hc^{N-1}}\leq \frac{1}{\la_1(\Om_t)}\int_{\Om_t}{|D_y u_i(t,\cdot)|^2\,d\hc^{N-1}}\leq C\eps(t)^{\frac{2}{N-1}}\de_i(t).
\end{equation}
We can now present two estimates which assure that $u_i$ and $Du_i$ can not be too big in $\Om^-(t)$.

\begin{lemma}\label{primastima}
Under the assumptions of Lemma~\ref{lemmatail}, for every $1\leq i\leq k$ and $t\leq \bar t$ the following inequalities hold: 
\begin{align}\label{udu-}
\int_{\Om^-(t)} u_i^2 \leq C_1 \eps(t)^{\frac{1}{N-1}}\de_i(t)\,, &&
\int_{\Om^-(t)} |Du_i|^2 \leq C_1 \eps(t)^{\frac{1}{N-1}}\de_i(t)\,,
\end{align}
for some $C_1=C_1(k,K,N)$.
\end{lemma}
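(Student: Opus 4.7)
The plan is to derive both bounds from a single Pohozaev-type integration by parts, combined with Faber--Krahn on $\Om^-(t)$ and on the cross-section~$\Om_t$.

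Multiplying the eigenvalue equation $-\Delta u_i=\la_i u_i$ by $u_i$ and integrating on $\Om^-(t)$, the divergence theorem (using $u_i=0$ on $\partial\Om$ and that the outer normal to $\Om^-(t)$ along the slice $\Om_t$ is $e_1$) yields the identity
\[
\int_{\Om^-(t)}|Du_i|^2 \;=\; \la_i \int_{\Om^-(t)} u_i^2 \;+\; \int_{\Om_t} u_i\,\partial_x u_i\,d\hc^{N-1}. \qquad (\dagger)
\]
The slice integral is handled by Cauchy--Schwarz followed by~(\ref{muest}): since $(\partial_x u_i)^2\leq |Du_i|^2$,
\[
\Big|\int_{\Om_t} u_i\,\partial_x u_i\Big|\leq \mu_i(t)^{1/2}\de_i(t)^{1/2}\leq C\,\eps(t)^{1/(N-1)}\de_i(t),
\]
which is exactly the right-hand side appearing in~(\ref{udu-}).

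To bound $\int |Du_i|^2$ I need to absorb $\la_i\int u_i^2$ in~$(\dagger)$. Since $|\Om^-(t)|\leq 2\mh$, Faber--Krahn in $\R^N$ gives $\la_1(\Om^-(t))\geq \la_1(B_N)/(2\mh)^{2/N}$, and the choice~(\ref{defmh}) of $\mh$ then ensures $\la_1(\Om^-(t))\geq 2K\geq 2\la_i$. If $u_i$ belonged to $W^{1,2}_0(\Om^-(t))$, the Rayleigh quotient would thus absorb $\la_i \int u_i^2$ into $\tfrac12 \int|Du_i|^2$ and the gradient estimate would follow from $(\dagger)$ at once. The obstacle, and the only delicate point, is that $u_i$ need \emph{not} vanish on the slice~$\Om_t$, so it is not in $W^{1,2}_0(\Om^-(t))$.

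My plan to get around this is to cut $u_i$ off in a thin slab of width $\eta>0$ adjacent to $\Om_t$, working with $\tilde u := u_i\cdot g$ where $g$ is the piecewise linear ramp equal to~$1$ for $x\leq t-\eta$, $(t-x)/\eta$ for $t-\eta \leq x\leq t$, and $0$ for $x\geq t$; this function lies in $W^{1,2}_0(\Om^-(t))$. The cutoff expansion
\[
\int |D\tilde u|^2 = \int |Du_i|^2\,g^2 + 2\int g\,g'\, u_i\,\partial_x u_i + \int u_i^2\,(g')^2,
\]
combined with the cross-sectional bound $\mu_i(s) \leq C\,\eps(s)^{2/(N-1)}\de_i(s)$ of~(\ref{muest}), produces an error which is minimised when the slab thickness matches the natural scale $\eta\sim \sqrt{\mu_i(t)/\de_i(t)}$; at this choice the error is of order $\sqrt{\mu_i(t)\de_i(t)}\leq C\,\eps(t)^{1/(N-1)}\de_i(t)$, that is, exactly the order of the slice term already extracted from $(\dagger)$. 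Substituting everything back into $(\dagger)$ then absorbs $\la_i \int u_i^2$ and yields the gradient estimate, and the $L^2$ estimate follows either by applying Faber--Krahn to $\tilde u$ directly or by feeding the gradient bound back into $(\dagger)$.

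The main obstacle is precisely this cutoff optimisation: one has to check that matching the slab width $\eta$ to the intrinsic cross-sectional scale $\sqrt{\mu_i(t)/\de_i(t)}$ of~$u_i$ forces all cutoff error terms to the clean size $\eps(t)^{1/(N-1)}\de_i(t)$, with no worse power of~$\eps(t)$; this is what makes the two estimates in~(\ref{udu-}) come out with the same right-hand side as the boundary term in~$(\dagger)$.
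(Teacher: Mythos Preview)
Your identity $(\dagger)$ and the Cauchy--Schwarz bound on the slice term are exactly what the paper uses. The divergence is in how you pass from the Rayleigh inequality on $\Om^-(t)$ to an inequality for $u_i$ itself, and here your cutoff argument has a genuine gap.

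The cutoff errors you write down are integrals over the slab $\{t-\eta<x<t\}$: the term $\int u_i^2(g')^2=\eta^{-2}\int_{t-\eta}^t\mu_i(s)\,ds$ and the cross term involve $\mu_i(s)$ and $\de_i(s)$ for $s\in(t-\eta,t)$, not $\mu_i(t)$ and $\de_i(t)$. Your claim that the optimised error is ``of order $\sqrt{\mu_i(t)\de_i(t)}$'' tacitly replaces these slab integrals by $\eta$ times the value at $t$, which is unjustified: the sections $\Om_s$ and the profiles $u_i(s,\cdot)$ for $s$ just below $t$ can be arbitrarily large compared with those at $t$ (think of $\Om_t$ being a very thin disk while $\Om_{t-\eta/2}$ is huge). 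So the cutoff errors are not controlled by $\eps(t)^{1/(N-1)}\de_i(t)$, and your absorption step does not close. Bounding the slab integrals crudely by $\int_{\Om^-(t)}u_i^2$ or $\int_{\Om^-(t)}|Du_i|^2$ only makes the argument circular.

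The paper sidesteps this entirely by \emph{reflection} rather than cutoff: let $\Om_S^-$ be the union of $\Om^-(t)$ with its mirror image across $\{x=t\}$, and let $u_S$ be the even reflection of $u_i$. Then $u_S\in W^{1,2}_0(\Om_S^-)$ (it vanishes on $\partial\Om_S^-$, which is contained in $\partial\Om$ and its reflection), and by symmetry $\rc(u_S,\Om_S^-)=\rc(u_i,\Om^-(t))$ \emph{exactly}, with no error term at all. Faber--Krahn on $\Om_S^-$ (of volume $2m(t)\le4\mh$) combined with the choice~(\ref{defmh}) then gives
\[
\int_{\Om^-(t)}u_i^2\le\frac{(2m(t))^{2/N}}{\la_1(B_N)}\int_{\Om^-(t)}|Du_i|^2\le\frac{1}{2K}\int_{\Om^-(t)}|Du_i|^2,
\]
which is precisely the absorption inequality you were after. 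Feeding this into your $(\dagger)$ yields both estimates in~(\ref{udu-}) immediately.
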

\begin{proof}
Let us fix $t\leq \bar t$. Consider the set $\Om_S^-$ obtained by the union of $\Om^-(t)$ and its reflection with respect to the plane $\{x=t\}$, and call $u_S \in W^{1,2}_0(\Om_S)$ the function obtained by reflecting $u_i$. Calling $B_N$ the unit ball in $\R^N$, we find then
\[
\frac{\lambda_1(B_N)}{\big(2m(t)\big)^{\frac{2}{N}}} = \frac{\lambda_1(B_N)}{|\Om_S^-|^{\frac{2}{N}}}
\leq \la_1(\Om_S^-) \leq \rc(u_S,\Om^-_S)= \rc\big(u_i,\Om^-(t)\big)
= \frac{\bal \int_{\Om^-(t)} |Du_i|^2\eal}{\bal \int_{\Om^-(t)} u_i^2 \eal}\,,
\]
by the symmetry of $\Om^-_S$, and using again~(\ref{rescaling}). This estimate gives
\begin{equation}\label{eq1}
\int_{\Om^-(t)} u_i^2 \leq \frac{\big(2m(t)\big)^{\frac 2N}}{\lambda_1(B_N)}\, \int_{\Om^-(t)} |Du_i|^2
\end{equation}
which in particular, being $m(t)\leq 2\mh$ and recalling~(\ref{defmh}), implies
\begin{equation}\label{estray2}
\int_{\Omega^-(t)} u_i^2 \leq \frac 12\,.
\end{equation}
On the other hand, recalling that $-\Delta u_i = \lambda_i u_i$, by Schwarz inequality and using~(\ref{muest}) we have
\begin{equation}\label{eq2}\begin{split}
\int_{\Om^-(t)} |Du_i|^2 &=  \int_{\Om^-(t)} \lambda_i u_i^2 + \int_{\Om_t}  u_i \,\frac{\partial u_i}{\partial \nu}
\leq K \int_{\Om^-(t)} u_i^2 + \sqrt{\int_{\Om_t} u_i^2  \int_{\Om_t} |Du_i|^2}\\
&\leq K \int_{\Om^-(t)} u_i^2 + C \eps(t)^{\frac 1{N-1}} \delta_i(t)\,.
\end{split}\end{equation}
It is now easy to obtain~(\ref{udu-}) combining~(\ref{eq1}) and~(\ref{eq2}). In fact, by inserting the latter into the first, we find
\[
\int_{\Om^-(t)} u_i^2 \leq  \frac{\big(2m(t)\big)^{\frac 2N}}{\lambda_1(B_N)}\, \bigg( K \int_{\Om^-(t)} u_i^2 + C \eps(t)^{\frac 1{N-1}} \delta_i(t) \bigg)\,,
\]
which by~(\ref{defmh}) again yields
\begin{equation}\label{leftudu-}
\frac{1}{2} \int_{\Om^-(t)} u_i^2 \leq  \frac{\big(2m(t)\big)^{\frac 2N}}{\lambda_1(B_N)}\, C \eps(t)^{\frac 1{N-1}} \delta_i(t) 
\leq C \eps(t)^{\frac 1{N-1}} \delta_i(t) \,.
\end{equation}
The left estimate in~(\ref{udu-}) is then obtained. To get the right one, one has then just to insert~(\ref{leftudu-}) into~(\ref{eq2}).
\end{proof}

Let us go further into our construction, giving some definitions. For any $t\leq \bar t$ and $\sigma(t)>0$, we define the cylinder $Q(t)$, shown in Figure~\ref{taglio}, as
\begin{equation}\label{defcyl}
Q(t):=\Big\{(x,y)\in \rbb^N: \, t-\sigma < x < t,\ (t,y) \in\Om\Big\} = \big(t-\sigma,t\big) \times \Omega_t\,,
\end{equation}
where for any $t\leq \bar t$ we set
\begin{equation}\label{defsigma}
\sigma(t)= \eps(t)^{\frac 1{N-1}}\,.
\end{equation}
We let also $\Omt(t)=\Omega^+(t)\cup Q(t)$, and we introduce $\ut_i\in W^{1,2}_0\big(\Omt(t)\big)$ as
\begin{equation}\label{utilde}
\ut_i(x,y):=\left\{
\begin{array}{ll}
u_i(x,y) &\hbox{if $(x,y)\in \Omega^+(t)$}\,, \\[5pt]
\bal\frac{x-t+\sigma}{\sigma}\,u_i(t,y)\eal &\hbox{if $(x,y)\in Q(t)$}\,.
\end{array}
\right.
\end{equation}
The fact that $\ut_i$ vanishes on $\partial\Omt(t)$ is obvious; moreover, $Du_i=D\ut_i$ on $\Omega^+(t)$, while on $Q(t)$ one has
\begin{equation}\label{estdut}
D\ut_i(x,y)=\left(\frac{u_i(t,y)}{\sigma}\,,\,\frac{x-t+\sigma}{\sigma}\,D_y u_i(t,y)\right)\,.
\end{equation}
\begin{figure}[htbp]
\begin{center}
\input{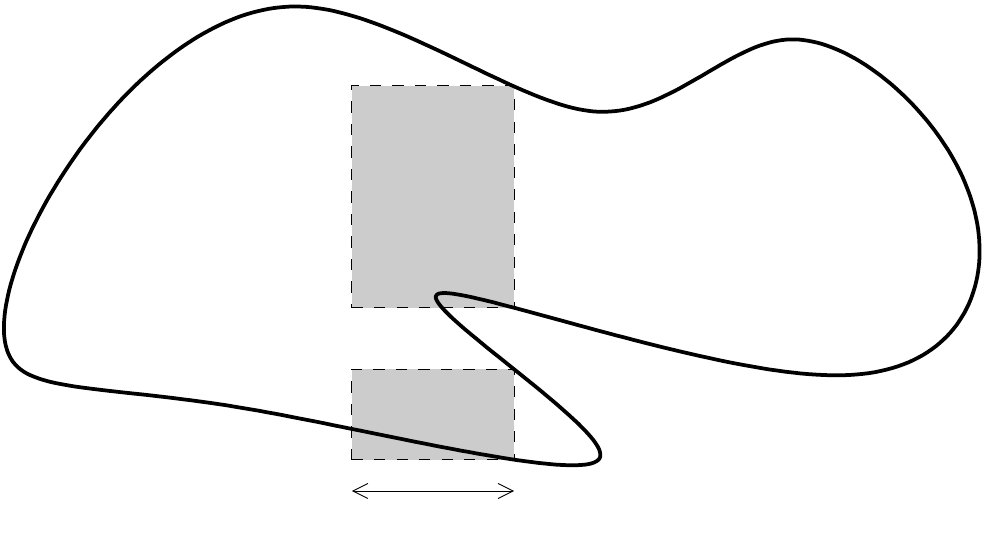_t}
\caption{A set $\Omega$ with the cylinder $Q(t)$ (shaded).}
\label{taglio}
\end{center}
\end{figure}
A simple calculation allows us to estimate the integrals of $\ut_i$ and $D\ut_i$ on $Q(t)$.
\begin{lemma}\label{lemmatest}
For every $t\leq \bar t$ and $1\leq i \leq k$, one has
\begin{align}\label{newtest}
\int_{Q(t)}|D\ut_i|^2 \leq C_2\eps(t)^{\frac{1}{N-1}}\de_i(t) \,, &&
\int_{Q(t)} \ut_i^2 \leq C_2 \eps(t)^{\frac 3{N-1}} \delta_i(t)\,,
\end{align}
for a suitable constant $C_2=C_2(k,K,N)$.
\end{lemma}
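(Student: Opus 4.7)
The proof will be a direct computation using Fubini's theorem, the explicit formula~(\ref{estdut}) for $D\ut_i$, and the already established bound~(\ref{muest}) relating $\mu_i$ to $\delta_i$. The key point is that the choice $\sigma(t)=\eps(t)^{1/(N-1)}$ in~(\ref{defsigma}) is precisely calibrated to balance the two contributions in $|D\ut_i|^2$.

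For the gradient estimate, I would split $|D\ut_i|^2$ into its horizontal and vertical components using~(\ref{estdut}). Integrating the horizontal part $u_i(t,y)^2/\sigma^2$ over $Q(t)=(t-\sigma,t)\times\Omega_t$ gives $\mu_i(t)/\sigma$, while integrating the vertical part, which involves the factor $\big((x-t+\sigma)/\sigma\big)^2$ whose $x$-integral over $(t-\sigma,t)$ equals $\sigma/3$, gives $(\sigma/3)\int_{\Omega_t}|D_y u_i|^2\,d\hc^{N-1}\leq (\sigma/3)\delta_i(t)$. By the very definition $\sigma=\eps^{1/(N-1)}$ the second term is immediately bounded by $C\eps^{1/(N-1)}\delta_i$. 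For the first term, applying~(\ref{muest}) yields $\mu_i(t)/\sigma\leq C\eps^{2/(N-1)}\delta_i(t)/\eps^{1/(N-1)}=C\eps^{1/(N-1)}\delta_i(t)$. Summing the two contributions gives the first estimate in~(\ref{newtest}).

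For the $L^2$ estimate on $\ut_i$, the computation is even simpler: since on $Q(t)$ we have $\ut_i(x,y)^2=\big((x-t+\sigma)/\sigma\big)^2 u_i(t,y)^2$, Fubini gives
\[
\int_{Q(t)}\ut_i^2 = \Big(\int_{t-\sigma}^t \Big(\tfrac{x-t+\sigma}{\sigma}\Big)^2 dx\Big) \mu_i(t) = \frac{\sigma}{3}\,\mu_i(t).
\]
Using~(\ref{muest}) and $\sigma=\eps^{1/(N-1)}$ one obtains $\int_{Q(t)}\ut_i^2\leq C\eps^{1/(N-1)+2/(N-1)}\delta_i(t)=C\eps^{3/(N-1)}\delta_i(t)$, as required.

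There is no genuine obstacle here: the statement is essentially a bookkeeping exercise that certifies the cylinder $Q(t)$ with the prescribed aspect ratio $\sigma=\eps^{1/(N-1)}$ is the right replacement piece. The content of the lemma lies entirely in the earlier Faber--Krahn computation~(\ref{muest}) — the present proof simply plugs this into the two elementary integrals, with $\sigma$ chosen so that $1/\sigma\cdot\eps^{2/(N-1)}$ and $\sigma$ balance, both equaling $\eps^{1/(N-1)}$.
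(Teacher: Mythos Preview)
Your proof is correct and is essentially identical to the paper's own argument: both compute $\int_{Q(t)}|D\ut_i|^2=\mu_i/\sigma+\frac{\sigma}{3}\int_{\Omega_t}|D_y u_i|^2$ and $\int_{Q(t)}\ut_i^2=\frac{\sigma}{3}\mu_i$ by Fubini, then invoke~(\ref{muest}) and the choice $\sigma=\eps^{1/(N-1)}$. If anything, your version is slightly more careful in distinguishing $\int_{\Omega_t}|D_y u_i|^2$ from $\delta_i(t)$ before passing to the inequality.
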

\begin{proof}
Thanks to~(\ref{estdut}), and using also~(\ref{muest}) and~(\ref{defsigma}), one obtains the first estimate in~(\ref{newtest}) since
\[\begin{split}
\int_{Q(t)} |D\ut_i(x,y)|^2 \,dx\,dy
&= \int_{t-\sigma}^t \int_{\Omega_t} \frac{u_i^2( t,y)}{\sigma^2} + \frac{(x-t+\sigma)^2}{\sigma^2}\,|D_y u_i(t,y)|^2\,dy \,dx
= \frac {\mu_i}\sigma +\frac{\delta_i\sigma}3\\
&\leq C\,\frac{\eps(t)^{\frac{2}{N-1}}\de_i(t)}{\sigma} + \frac{\delta_i \sigma}3
= C\eps(t)^{\frac{1}{N-1}}\de_i(t) \,.
\end{split}\]
On the other hand, the second estimate in~(\ref{newtest}) follows, also again using~(\ref{muest}) and~(\ref{defsigma}), by
\[
\int_{Q(t)} \ut_i(x,y)^2\,dx\,dy = \int_{t-\sigma}^t \int_{\Omega_t}  \ut_i^2(t,y)\,dy \,dx = \frac{\sigma\mu_i} 3
\leq C \sigma\eps(t)^{\frac 2{N-1}} \delta_i(t)=C \eps(t)^{\frac 3{N-1}} \delta_i(t)\,.
\]
\end{proof}

Another simple but useful estimate concerns the Rayleigh quotients of the functions $\ut_i$ on the sets $\Omt(t)$ and the integral of the products $\ut_i \ut_j$.

\begin{lemma}\label{noth}
For every $t\leq \bar t$ and $1\leq i \leq k$, one has
\begin{equation}\label{est1a}
\rc\big(\ut_i,\Omt(t)\big)\leq \la_i(\Om)+ C\eps(t)^{\frac 1 {N-1}}\de_i(t)\,.
\end{equation}
Moreover, for every $i\neq j\in \{1,\, 2,\, \dots \,,\, k\}$, one has
\begin{equation}\label{est1b}
\bigg|\int_{\Omt(t)}  \ut_i \ut_j + D\ut_i \cdot D\ut_j\bigg| \leq C\Big(\eps(t)^{\frac 3{N-1}}+\eps(t)^{\frac 1{N-1}}\Big)\sqrt{\de_i(t)\de_j(t)}\,.
\end{equation}
\end{lemma}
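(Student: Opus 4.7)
The plan is to decompose $\Omt(t)=\Omega^+(t)\cup Q(t)$, observe that $\ut_i$ agrees with $u_i$ on $\Omega^+(t)$ and is explicit on $Q(t)$, and bound each piece separately using Lemmas~\ref{primastima} and~\ref{lemmatest}.

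For~(\ref{est1a}), I would write the Rayleigh quotient as
\[
\rc(\ut_i,\Omt(t)) = \frac{A+B}{C+D},
\]
where $A=\int_{\Omega^+(t)}|Du_i|^2$, $B=\int_{Q(t)}|D\ut_i|^2$, $C=\int_{\Omega^+(t)}u_i^2$, and $D=\int_{Q(t)}\ut_i^2\geq 0$. Using $\int_\Omega u_i^2=1$ and $\int_\Omega|Du_i|^2=\lambda_i$, the algebraic identity
\[
A = \lambda_i C + \lambda_i\int_{\Omega^-(t)}u_i^2 - \int_{\Omega^-(t)}|Du_i|^2
\]
lets me rewrite $(A+B)/C$ as $\lambda_i$ plus a correction. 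Dropping the (nonpositive) term $-\int_{\Omega^-(t)}|Du_i|^2$, bounding $\lambda_i\int_{\Omega^-(t)}u_i^2\leq K\cdot C_1\eps(t)^{1/(N-1)}\de_i(t)$ via Lemma~\ref{primastima}, and using $B\leq C_2\eps(t)^{1/(N-1)}\de_i(t)$ from Lemma~\ref{lemmatest}, I then combine with $C\geq 1/2$ (a direct consequence of~(\ref{estray2})) to conclude, since $D\geq 0$ gives $(A+B)/(C+D)\leq (A+B)/C$.

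For~(\ref{est1b}), I would split the integral into the $\Omega^+(t)$ and $Q(t)$ contributions. On $\Omega^+(t)$ I exploit orthogonality of the eigenfunctions: since $\int_\Omega u_iu_j=0$ and (by integration by parts) $\int_\Omega Du_i\cdot Du_j=\lambda_i\int_\Omega u_iu_j=0$ for $i\ne j$, the two terms become
\[
-\int_{\Omega^-(t)} u_iu_j -\int_{\Omega^-(t)} Du_i\cdot Du_j.
\]
Cauchy--Schwarz together with~(\ref{udu-}) yields a bound of $C\eps(t)^{1/(N-1)}\sqrt{\de_i(t)\de_j(t)}$. On $Q(t)$, Cauchy--Schwarz combined with~(\ref{newtest}) gives a bound of $C\bigl(\eps(t)^{3/(N-1)}+\eps(t)^{1/(N-1)}\bigr)\sqrt{\de_i(t)\de_j(t)}$. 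Summing the two contributions yields~(\ref{est1b}).

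There is no single serious obstacle here: the lemma is an assembly of the previously prepared estimates. The only subtle point is keeping track of the denominator in the Rayleigh quotient—specifically, that the portion of $u_i$'s $L^2$-mass lost in passing from $\Omega$ to $\Omt(t)$ is controlled by~(\ref{estray2}) and Lemma~\ref{primastima}, so that the ratio does not blow up.
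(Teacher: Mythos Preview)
Your proposal is correct and follows essentially the same route as the paper: decompose $\Omt(t)=\Omega^+(t)\cup Q(t)$, drop the nonnegative $Q(t)$-contribution from the denominator, and control every remaining piece via Lemmas~\ref{primastima} and~\ref{lemmatest} together with~(\ref{estray2}). The only minor variation is in~(\ref{est1a}): the paper handles $\int_{\Omega^+(t)}|Du_i|^2$ by integrating $-\Delta u_i=\lambda_i u_i$ by parts on $\Omega^+(t)$ and bounding the resulting boundary term on $\Omega_t$, whereas you subtract from the global identity $\int_\Omega|Du_i|^2=\lambda_i$ and invoke~(\ref{udu-}) on $\Omega^-(t)$---an equivalent and equally valid bookkeeping.
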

\begin{proof}
Recalling that $-\Delta u_i = \lambda_i(\Omega) u_i$, making use of~(\ref{estray2}) and~(\ref{newtest}) and arguing as in~(\ref{eq2}), we obtain
\[\begin{split}
\rc\big(\ut_i,\Omt(t)\big)
&=\frac{\bal\int_{\Om^+(t)} |D\ut_i|^2+\int_{Q(t)} |D\ut_i|^2\eal}{\bal\int_{\Om^+(t)} \ut_i^2 + \int_{Q(t)} \ut_i^2\eal}
\leq \frac{\bal\int_{\Om^+(t)} |Du_i|^2+\int_{Q(t)} |D\ut_i|^2\eal}{\bal\int_{\Om^+(t)} u_i^2\eal}\\
&= \frac{\bal\lambda_i(\Omega) \int_{\Om^+(t)} u_i^2+\int_{\Om_t} u_i \frac{\partial u_i}{\partial \nu}+ \int_{Q(t)} |D\ut_i|^2\eal}{\bal\int_{\Om^+(t)} u_i^2\eal}
\leq \lambda_i(\Omega) + C\eps(t)^{\frac 1{N-1}}\de_i(t)\,,
\end{split}\]
hence~(\ref{est1a}) is proved.\par
On the other hand, recall that $u_i$ and $u_j$ are orthogonal on $\Om$ both in $L^2$ and in $W^{1,2}_0$ sense by definition, hence by using~(\ref{udu-}) and~(\ref{newtest}) we find
\[\begin{split}
\bigg|\int_{\Omt(t)} \ut_i \ut_j\bigg| &\leq  \bigg|\int_{\Omega^+(t)} u_i u_j  \bigg| + \bigg| \int_{Q(t)} \ut_i \ut_j\bigg|
=  \bigg|\int_{\Omega^-(t)} u_i u_j  \bigg| + \bigg| \int_{Q(t)} \ut_i \ut_j\bigg|\\
&\leq C\bigg(\eps(t)^{\frac{1}{N-1}}\sqrt{\de_i(t)\de_j(t)}+\eps(t)^{\frac{3}{N-1}}\sqrt{\de_i(t)\de_j(t)}\bigg)\,.
\end{split}\]
In the very same way, concerning $D\ut_i$ and $D\ut_j$, we have
\[\begin{split}
\bigg|\int_{\Omt(t)} D\ut_i \cdot D\ut_j\bigg| &\leq  \bigg|\int_{\Omega^+(t)} Du_i \cdot Du_j  \bigg| + \bigg| \int_{Q(t)} D\ut_i \cdot D\ut_j\bigg|\\
&=  \bigg|\int_{\Omega^-(t)} Du_i \cdot Du_j  \bigg| + \bigg| \int_{Q(t)} D\ut_i \cdot D\ut_j\bigg|
\leq C\eps(t)^{\frac{1}{N-1}}\sqrt{\de_i(t)\de_j(t)}\,.
\end{split}\]
Adding up the last two estimates yields~(\ref{est1b}).
\end{proof}

In order to prove Lemma~\ref{lemmatail}, we need to compare the eigenvalues of $\Omega$ and those of $\Omt(t)$; this can be done by means of the min-max principle, Theorem~\ref{chareigen}, which relates the eigenvalues with the Rayleigh quotients of $W^{1,2}_0$ functions.

\begin{lemma}\label{lemmanu}
There exist a small constant $\nu=\nu(k,K,N)<1$ and a constant $C_3=C_3(k,K,N)$ such that, if $\eps(t),\, \delta_i(t) \leq \nu$ for every $1\leq i \leq k$, then
\begin{align}\label{est2new}
\la_j\big(\Omt(t)\big)\leq\la_j(\Om)+C_3 \eps(t)^{\frac 1 {N-1}}\de(t) && \forall\, 1\leq j \leq k\,.
\end{align}
\end{lemma}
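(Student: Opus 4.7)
The plan is to apply the min-max principle (Theorem~\ref{chareigen}) on $\widetilde\Omega(t)$ with the $j$-dimensional test space
\[
K_j := \mathrm{span}\bigl(\widetilde u_1, \widetilde u_2, \dots, \widetilde u_j\bigr) \subseteq W^{1,2}_0\bigl(\widetilde\Omega(t)\bigr),
\]
and to show that every $w \in K_j \setminus \{0\}$ satisfies $\rc(w, \widetilde\Omega(t)) \leq \lambda_j(\Omega) + C_3\, \eps(t)^{1/(N-1)}\,\delta(t)$, which together with Theorem~\ref{chareigen} is exactly (\ref{est2new}). Set $A_{il} := \int_{\widetilde\Omega(t)} D\widetilde u_i \cdot D\widetilde u_l$ and $B_{il} := \int_{\widetilde\Omega(t)} \widetilde u_i\, \widetilde u_l$. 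The ingredients I will use are: the bound $B_{ii} \geq 1/2$ coming from (\ref{estray2}) together with the definition of $\widetilde u_i$; the diagonal Rayleigh estimate $A_{ii} \leq \bigl(\lambda_i(\Omega) + C\eps(t)^{1/(N-1)}\delta_i(t)\bigr) B_{ii}$ from (\ref{est1a}); and the separate off-diagonal bounds $|A_{il}|,\,|B_{il}| \leq C\eps(t)^{1/(N-1)}\sqrt{\delta_i(t)\delta_l(t)}$ for $i \neq l$. These last are not part of the statement (\ref{est1b}), but are already produced in its proof: the $L^2$- and $H^1_0$-orthogonality of $u_i,u_l$ on $\Omega$ localize the cross integrals over $\Omega^+(t)$ to $\Omega^-(t)$, after which (\ref{udu-}) and (\ref{newtest}) give the required smallness.

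Fix any $w = \sum_{i=1}^{j} a_i \widetilde u_i$. The Cauchy--Schwarz identity
\[
\sum_{i,l} |a_i a_l|\sqrt{\delta_i(t)\delta_l(t)} = \biggl(\sum_i |a_i|\sqrt{\delta_i(t)}\biggr)^{\!2} \leq \biggl(\sum_i a_i^2\biggr)\delta(t) \leq k\nu \sum_i a_i^2
\]
controls every cross sum that will appear. The denominator therefore obeys
\[
\int_{\widetilde\Omega(t)} w^2 \;\geq\; \frac{1}{2}\sum_i a_i^2 \,-\, C\eps(t)^{\frac{1}{N-1}} k\nu \sum_i a_i^2 \;\geq\; \frac{1}{4}\sum_i a_i^2,
\]
provided $\nu=\nu(k,K,N)$ is chosen small enough. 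In particular $K_j$ is genuinely $j$-dimensional.

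For the numerator I rewrite
\[
\int_{\widetilde\Omega(t)} |Dw|^2 - \lambda_j(\Omega)\int_{\widetilde\Omega(t)} w^2 \;=\; \sum_i a_i^2\bigl(A_{ii} - \lambda_j(\Omega) B_{ii}\bigr) + \sum_{i \neq l} a_i a_l\bigl(A_{il} - \lambda_j(\Omega) B_{il}\bigr).
\]
Using $\lambda_i(\Omega) \leq \lambda_j(\Omega)$ for $i \leq j$, (\ref{est1a}) and $B_{ii} \leq 1$, the diagonal piece is at most $C\eps(t)^{1/(N-1)}\sum_i a_i^2 \delta_i(t) \leq C\eps(t)^{1/(N-1)}\delta(t)\sum_i a_i^2$. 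Using $\lambda_j(\Omega) \leq K$ and the separate off-diagonal bounds, the cross piece is controlled by $(1+K)C\eps(t)^{1/(N-1)}\sum_{i\neq l}|a_i a_l|\sqrt{\delta_i(t)\delta_l(t)}$, which the Cauchy--Schwarz identity above again collapses to $C\eps(t)^{1/(N-1)}\delta(t)\sum_i a_i^2$. Dividing by $\int w^2 \geq \tfrac14 \sum a_i^2$ gives the desired bound on $\rc(w,\widetilde\Omega(t))$, and Theorem~\ref{chareigen} closes the argument.

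The main subtlety is precisely the need for \emph{individual} bounds on $|A_{il}|$ and $|B_{il}|$: Lemma~\ref{noth} only records the sum $|A_{il} + B_{il}|$, but the calculation used to prove $A_{il}-\lambda_j B_{il}$ is small requires both pieces separately. The remedy is to unpack the proof of (\ref{est1b}) and harvest the intermediate bounds. The other place where smallness of $\nu$ is really used is in making the Gram matrix $(B_{il})$ a small perturbation of $(B_{ii})\delta_{il}$, so that the denominator of the Rayleigh quotient stays comparable to $\sum a_i^2$; this is what forces the joint smallness hypothesis on $\eps(t)$ and the $\delta_i(t)$.
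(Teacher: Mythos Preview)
Your proof is correct and follows essentially the same route as the paper: both take $K_j=\mathrm{span}(\widetilde u_1,\dots,\widetilde u_j)$, invoke the min-max principle, and control the Rayleigh quotient of a generic $w\in K_j$ by using the separate off-diagonal bounds on $\int D\widetilde u_i\cdot D\widetilde u_l$ and $\int \widetilde u_i\widetilde u_l$ (which, as you rightly flag, appear in the \emph{proof} of Lemma~\ref{noth} rather than in its statement~(\ref{est1b})). The only cosmetic differences are that the paper normalizes by $\max|\beta_i|=1$ and manipulates the fraction directly, while you bound $\int|Dw|^2-\lambda_j\int w^2$ and divide; note also that your claim $B_{ii}\leq 1$ should strictly be $B_{ii}\leq 1+C\eps(t)^{3/(N-1)}\delta_i(t)$ in view of~(\ref{newtest}), but this is harmless since any uniform bound is enough there.
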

\begin{proof}
We aim to use the characterization given by Theorem~\ref{chareigen}. To do so, for every $1\leq j \leq k$ we define $K_j$ as the linear subspace of $W^{1,2}_0\big(\Omt(t)\big)$ spanned by the functions $\ut_i$ with $1\leq i \leq j$. First of all, we give the\\
{\bf Claim A.} For every $1\leq j\leq k$, the space $K_j$ has dimension $j$.\\
Suppose that the claim is not true. Then, there should exist some $1 \leq \ell \leq k$ and some coefficients $\beta_i$ for $i \neq \ell$ with all $|\beta_i|\leq 1$ and
\[
\ut_{\ell} = \sum_{1 \leq i \leq k,\, i \neq \ell} \beta_i \ut_i\,.
\]
Notice now that by~(\ref{estray2}) we know that $\int_{\Omega^+(t)} u_{\ell}^2 \geq 1/2$, hence also by~(\ref{est1b}) we deduce
\[
\frac 12 \leq \int_{\Omt(t)}\ut_\ell^2 = \int_{\Omt(t)} \sum_{1 \leq i \leq k,\, i \neq \ell} \beta_i \ut_i \ut_\ell
\leq \sum_{1 \leq i \leq k,\, i \neq \ell} \bigg|\int_{\Omt(t)} \ut_i \ut_\ell\bigg|
\leq k\, C\Big( \nu^{\frac 3{N-1}} +  \nu^{\frac 1{N-1}}\Big) \nu < \frac 12\,,
\]
where the last inequality is true provided that $\nu=\nu(k,K,N)$ is chosen small enough. The absurd shows the validity of Claim~A.\par
We can now show~(\ref{est2new}): to do so, pick a generic function $w\in K_j$, which can be written (up to a rescaling) as $w = \sum_{i=1}^j \beta_i \ut_i$, where $\max \big\{|\beta_i|,\, 1\leq i\leq j\big\}= 1$. We need to evaluate $\rc\big(w,\Omt(t)\big)$: we start by noticing that
\begin{equation}\label{jts}\begin{split}
\rc\big(w,\Omt(t)\big) &= \frac{\int_{\Omt(t)} |Dw|^2}{\int_{\Omt(t)} w^2}
= \frac{\sum_{i=1}^j \beta_i^2\int_{\Omt(t)} |D\ut_i|^2 + \sum_{i\neq j} \beta_i \beta_j \int_{\Omt(t)} D\ut_i \cdot D\ut_j}{\sum_{i=1}^j \beta_i^2\int_{\Omt(t)} \ut_i^2 + \sum_{i\neq j} \beta_i \beta_j \int_{\Omt(t)} \ut_i\ut_j}\\
&\leq \frac{\sum_{i=1}^j \beta_i^2\int_{\Omt(t)} |D\ut_i|^2 + C k^2 \eps(t)^{\frac 1{N-1}}\delta (t)}{\sum_{i=1}^j \beta_i^2\int_{\Omt(t)} \ut_i^2 - C k^2 \eps(t)^{\frac 1{N-1}}\delta (t)}\,,
\end{split}\end{equation}
where the last inequality comes by~(\ref{est1b}). If $\nu(k,K,N)$ is small enough, then
\[
Ck^2\eps(t)^{\frac 1{N-1}} \delta(t) \leq C k^2 \nu^{\frac N{N-1}}\leq \frac14\,,
\]
hence by the choice of $\beta_i$ and by~(\ref{estray2}) the denominator in the last fraction of~(\ref{jts}) is bigger than $1/4$ (in particular, it is strictly positive). As a consequence, recalling also that by~(\ref{est1a}) one has for every $1\leq i \leq j$
\[
\frac{\bal\int_{\Omt(t)} |D\ut_i|^2\eal}{\bal\int_{\Omt(t)} \ut_i^2\eal} 
\leq \lambda_i(\Omega) + C\eps(t)^{\frac 1{N-1}}\delta_i(t) 
\leq \lambda_j(\Omega) + C\eps(t)^{\frac 1{N-1}}\delta(t)\,,
\]
from~(\ref{jts}) we deduce
\[\begin{split}
\rc\big(w,\Omt(t)\big)&\leq
\frac{\Big(\lambda_j(\Omega) + C\eps(t)^{\frac 1{N-1}}\delta(t)\Big)
\bigg(\sum_{i=1}^j \beta_i^2\int_{\Omt(t)} \ut_i^2\bigg) + C \eps(t)^{\frac 1{N-1}}\delta (t)}{\sum_{i=1}^j \beta_i^2\int_{\Omt(t)} \ut_i^2 - C  \eps(t)^{\frac 1{N-1}}\delta (t)}\\
&\leq \frac{\Big(\lambda_j(\Omega) + C\eps(t)^{\frac 1{N-1}}\delta(t)\Big)+ 2C \eps(t)^{\frac 1{N-1}}\delta (t)}{1- 2C \eps(t)^{\frac 1{N-1}}\delta(t)}
\leq \lambda_j(\Omega) + C \eps(t)^{\frac 1{N-1}}\delta(t)
\end{split}\]
(keep in mind that the constant $C=C(k,K,N)$ may increase from line to line). The validity of~(\ref{est2new}) is then now an immediate consequence of Theorem~\ref{chareigen} and of Claim~A.
\end{proof}

We can now enter in the central part of our construction. Basically, we aim to show that either $\Omega$ already satisfies the requirements of Lemma~\ref{lemmatail}, or some $\Omt(t)$ does it, up to a rescaling. To do so, we need another definition, namely, for every $t\leq \bar t$ we define the rescaled set
\[
\Omh(t) := \big| \Omt(t) \big|^{-\frac 1N} \Omt(t)\,,
\]
so that $\big| \Omh(t)\big|=1$. We can now show the following result.

\begin{lemma}\label{threeconditions}
Let $\Omega$ be as in the assumptions of Lemma~\ref{lemmatail}, and let $t\leq \bar t$. There exists $C_4=C_4(k,K,N)$ such that exactly one of the three following conditions hold:
\begin{enumerate}
\item[(1) ] $\max\big\{ \eps(t),\, \delta(t) \big\} > \nu$;
\item[(2) ] (1) does not hold and $m(t) \leq C_4 \big( \eps(t) + \delta(t)\big) \eps(t)^{\frac 1{N-1}}$;
\item[(3) ] (1) and~(2) do not hold and for every $1\leq i \leq k$, one has $\lambda_i\big(\Omh(t)\big)< \lambda_i(\Omega)$.
\end{enumerate}
In particular, if condition~(3) holds for $t$ and $m(t)\geq \mh$, then for every $1\leq i \leq k$ one has $\lambda_i\big(\Omh(t)\big) < \lambda_i(\Om) - \eta$, being $\eta=\eta(k,K,N)>0$.
\end{lemma}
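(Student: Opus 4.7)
The three conditions are mutually exclusive by construction, so the only thing to prove is that their negations together are impossible, i.e. that if (1) and (2) both fail then (3) holds. Since $\hat\Omega(t)$ is obtained from $\tilde\Omega(t)$ by a uniform rescaling, the rescaling formula~(\ref{rescaling}) gives
\[
\la_j\big(\Omh(t)\big) = \big|\Omt(t)\big|^{\frac{2}{N}} \la_j\big(\Omt(t)\big),
\]
so everything reduces to estimating $\la_j(\Omt(t))$ and $|\Omt(t)|$. The first is handled by Lemma~\ref{lemmanu} as soon as (1) fails: under $\eps(t), \de(t)\leq\nu$ one has
\[
\la_j\big(\Omt(t)\big) \leq \la_j(\Om) + C_3\,\eps(t)^{\frac{1}{N-1}}\de(t).
\]
For the volume, by~(\ref{defcyl}) and~(\ref{defsigma}),
\[
\big|\Omt(t)\big| = \big|\Om^+(t)\big| + \big|Q(t)\big| = 1-m(t) + \sigma(t)\eps(t) = 1 - \big(m(t) - \eps(t)^{\frac{N}{N-1}}\big).
\]

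\textbf{Main step.} Setting $\alpha(t) := m(t) - \eps(t)^{N/(N-1)}$, I would observe that if (2) fails with $C_4 \geq 2$ then
\[
m(t) > C_4 \,(\eps(t)+\de(t))\,\eps(t)^{\frac{1}{N-1}} \geq C_4\,\eps(t)^{\frac{N}{N-1}},
\]
whence $\alpha(t) \geq m(t)(1-1/C_4) \geq m(t)/2 > 0$; in particular $|\Omt(t)| = 1-\alpha(t) \in (1/2,1)$, since $m(t)\leq 2\mh < 1/2$. In this range the elementary inequality $(1-\alpha)^{2/N} \leq 1 - c_N \alpha$ holds for some $c_N>0$ depending only on $N$. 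Combining everything,
\[
\la_j\big(\Omh(t)\big) \leq (1-c_N\alpha(t))\bigl(\la_j(\Om)+C_3\,\eps(t)^{\frac{1}{N-1}}\de(t)\bigr)
\leq \la_j(\Om) - c_N \alpha(t)\,\la_j(\Om) + C_3\,\eps(t)^{\frac{1}{N-1}}\de(t).
\]
Using Faber--Krahn $\la_j(\Om) \geq \la_1(\Om) \geq \la_1(B_N)$, the decrease is strict exactly when
\[
c_N \,\alpha(t)\, \la_1(B_N) > C_3\,\eps(t)^{\frac{1}{N-1}}\de(t).
\]
Since $\alpha(t) \geq \tfrac12\,m(t)$ and failing~(2) means $m(t) > C_4 (\eps(t)+\de(t))\eps(t)^{1/(N-1)} \geq C_4\,\de(t)\,\eps(t)^{1/(N-1)}$, this required inequality reduces to choosing $C_4 = C_4(k,K,N)$ large enough depending on $C_3, c_N, \la_1(B_N)$; this yields (3).

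\textbf{Uniform gap.} For the last sentence, suppose (3) holds and $m(t) \geq \mh$. The same computation gives $\alpha(t) \geq m(t)/2 \geq \mh/2$, and plugging this into the inequality above,
\[
\la_j\big(\Omh(t)\big) \leq \la_j(\Om) - \tfrac{c_N\mh}{2}\,\la_1(B_N) + C_3\,\eps(t)^{\frac{1}{N-1}}\de(t).
\]
Since $\eps(t),\de(t) \leq \nu$, the error term is at most $C_3\,\nu^{\frac{N}{N-1}}$. Provided $\nu=\nu(k,K,N)$ is chosen small enough (shrinking it from Lemma~\ref{lemmanu} if needed, which only strengthens its conclusion), this error is $\leq \tfrac{c_N \mh}{4}\la_1(B_N)$, and the uniform gap $\eta := \tfrac{c_N\mh}{4}\la_1(B_N)$ depends only on $k, K, N$.

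\textbf{Main obstacle.} The proof is essentially algebraic once one writes down the volume formula and invokes Lemma~\ref{lemmanu}; the delicate point is the circular-looking choice of constants ($\nu$ small enough compared to $\mh$ and $C_3$, $C_4$ large enough compared to $c_N$, $C_3$, $\la_1(B_N)$), which must be checked to be consistent and to depend only on $k,K,N$. In particular one must verify that making $\nu$ smaller than its value in Lemma~\ref{lemmanu} preserves that lemma's conclusion --- which is automatic --- so that the whole chain of constants closes without circularity.
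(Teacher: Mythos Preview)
Your proof is correct and follows essentially the same approach as the paper: apply Lemma~\ref{lemmanu} for the eigenvalue bound, compute $|\Omt(t)|=1-m(t)+\eps(t)^{N/(N-1)}$, expand $|\Omt(t)|^{2/N}$ linearly, use Faber--Krahn to bound $\lambda_j(\Omega)\geq\lambda_1(B_N)$ from below, and then choose $C_4$ large and $\nu$ small. The only cosmetic difference is that you package $m(t)-\eps(t)^{N/(N-1)}$ into a single quantity $\alpha(t)$, whereas the paper keeps the two terms separate and writes the linearization directly as $(1-m(t)+\eps(t)^{N/(N-1)})^{2/N}\leq 1-\tfrac{2}{N}m(t)+\tfrac{2}{N}\eps(t)^{N/(N-1)}$; your constant $c_N$ is just their $2/N$, and the resulting choices of $C_4$ and $\eta$ are equivalent.
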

\begin{proof}
If~(1) holds true, there is of course nothing to prove. Otherwise, it is possible to apply Lemma~\ref{lemmanu}, hence  we have
\begin{equation}\label{putinto}
\la_i\big(\Omt(t)\big)\leq\la_i(\Om)+C_3\eps(t)^{\frac 1 {N-1}}\delta(t)
\end{equation}
for every $1\leq i \leq k$. By~(\ref{rescaling}) and the fact that $\big| \Omh(t)\big|=1$, we know that
\[
\lambda_i\big( \Omh(t)\big)= \big| \Omt(t) \big|^{\frac 2N} \lambda_i\big(\Omt(t)\big)\,.
\]
By construction,
\[
\big| \Omt(t)\big|= \big| \Om^+(t)\big|+ \big|Q(t)\big| = 1 - m(t) + \eps(t)^{\frac N{N-1}}\,,
\]
hence the above estimates and~(\ref{putinto}) lead to
\begin{equation}\label{muchhere}\begin{split}
\lambda_i(\Omh(t)) &= \Big( 1 - m(t) + \eps(t)^{\frac N{N-1}} \Big)^{\frac 2N} \lambda_i\big(\Omt(t)\big)\\
&\leq \Big( 1 - \frac 2N \, m(t) + \frac 2N\, \eps(t)^{\frac N{N-1}} \Big)  \Big( \la_i(\Om)+C_3\eps(t)^{\frac 1 {N-1}}\delta(t) \Big)\\
&\leq \lambda_i(\Omega) - \frac{2\lambda_1(B_N)}{N}\, m(t)+\frac{2K}N\, \eps(t)^{\frac N{N-1}}+\bigg(C_3+\frac 2N\bigg)\eps(t)^{\frac 1 {N-1}}\delta(t)\,.
\end{split}\end{equation}
This allows us to conclude. In fact, defining $C_4:= \frac{2(K+1)}N+C_3$, if $m(t) \leq C_4 \big( \eps(t) + \delta(t)\big) \eps(t)^{\frac 1{N-1}}$, then condition~(2) holds true. Otherwise, (\ref{muchhere}) directly implies that $\lambda_i\big(\Omh(t)\big) < \lambda_i(\Om)$.\par
Finally, assume that condition~(3) holds and $m(t)\geq \mh$: in this case, (\ref{muchhere}) directly implies
\[
\lambda_i\big(\Omh(t)\big) - \lambda_i(\Om)\leq - \frac{2\lambda_1(B_N)}{N}\, \mh+C_4\nu^{\frac N{N-1}} \leq -\eta\,,
\]
where $\eta=\lambda_1(B_N) \mh / N$ and the last inequality is true up to decrease $\nu$ (notice that \emph{decreasing} the value of the constant $\nu$ of Lemma~\ref{lemmanu} does not change the value of $C_3$).
\end{proof}

We are finally in position to give the proof of Lemma~\ref{lemmatail}.
\begin{proof}[Proof of Lemma~\ref{lemmatail}]
Let us start defining
\begin{equation}\label{defhatt}
\hat t := \sup \Big\{ t \leq \bar t : \, \hbox{condition~(3) of Lemma~\ref{threeconditions} holds for $t$}\Big\}\,,
\end{equation}
with the usual convention that, if condition~(3) is false for every $t\leq \bar t$, then $\hat t = -\infty$. We introduce now the following subsets of $(\hat t,\bar t)$,
\begin{align*}
A:&= \Big\{ t\in (\hat t,\bar t):\, \hbox{condition~(1) of Lemma~\ref{threeconditions} holds for $t$} \Big\}\,,\\
B:&=\Big\{t\in (\hat t,\bar t):\, \hbox{condition~(2) of Lemma~\ref{threeconditions} holds for $t$ and $m(t)>0$} \Big\}\,,
\end{align*}
and we further subdivide them as
\begin{align*}
A_1:= \Big\{ t \in A:\, \eps(t)\geq \delta(t) \Big\}\,, && A_2:= \Big\{ t \in A:\, \eps(t)<\delta(t) \Big\}\,, \\
B_1:= \Big\{ t \in B:\, \eps(t)\geq \delta(t) \Big\}\,, && B_2:= \Big\{ t \in B:\, \eps(t)<\delta(t) \Big\}\,.
\end{align*}
We aim to show that both $A$ and $B$ are uniformly bounded. Concerning $A_1$, observe that
\[
\nu\big|A_1\big| \leq \int_{A_1} \eps(t)\, dt = \Big| \Big\{ (x,y)\in \Omega:\, x \in A_1\Big\}\Big| \leq \big| \Omega\big|=1\,,
\]
so that $|A_1|\leq 1/\nu$. Concerning $A_2$, in the same way and also recalling that $\lambda_i(\Omega)\leq K$ for every $i\leq k$, we have
\[
\nu\big|A_2\big| \leq \int_{A_2} \delta(t)\, dt 
= \sum_{i=1}^k \int_{A_2 }\int_{\Omega_t} \big|Du_i(t,y)\big|^2\, d\H^{N-1}(y)\,dt 
\leq \sum_{i=1}^k \int_\Omega \big|Du_i\big|^2 \leq kK\,,
\]
so that $|A_2|\leq kK/\nu$. Summarizing, we have proved that
\begin{equation}\label{estA}
\big| A \big| \leq \frac{1 + kK}\nu\,.
\end{equation}
Let us then pass to the set $B_1$. To deal with it, we need a further subdivision, namely, we write $B_1= \cup_{n\in\N} B_1^n$, where
\begin{equation}\label{defB1n}
B_1^n := \bigg\{ t\in B_1:\, \frac{\mh}{2^n} < m(t) \leq \frac{\mh}{2^{n-1}} \bigg\}\,.
\end{equation}
Keeping in mind~(\ref{int1}), we know that $t\mapsto m(t)$ is an increasing function, and that for a.e. $t \in\R$ one has $m'(t) = \eps(t)$. Moreover, for every $t\in B_1$ one has by construction that
\[
m(t)\leq C_4 \big( \eps(t) + \delta(t)\big) \eps(t)^{\frac 1{N-1}}\leq 2 C_4\, \eps(t)^{\frac N{N-1}}\,.
\]
As a consequence, for every $t\in B_1^n$ one has
\[
m'(t) = \eps(t) \geq \frac{1}{C}\, m(t)^{\frac{N-1}N}
\geq \frac{1}{C}\, \mh^{\frac{N-1}N} \, \frac{1}{\big(2^{\frac{N-1}N}\big)^n}\,.
\]
This readily implies
\[
\frac{1}{C}\, \mh^{\frac{N-1}N} \, \frac{1}{\big(2^{\frac{N-1}N}\big)^n} \, \big| B_1^n \big| 
\leq \int_{B_1^n} m'(t) \leq \frac{\mh}{2^n}\,,
\]
which in turn gives
\[
\big| B_1^n \big| \leq  C \mh^{\frac 1N} \big(2^{-\frac 1N}\big)^n\,.
\]
Finally, we deduce
\begin{equation}\label{estB1}
\big| B_1 \big| = \sum_{n\in\N} \big| B_1^n\big| 
\leq C \mh^{\frac 1N} \sum_{n\in\N} \big(2^{-\frac 1N}\big)^n
= C \mh^{\frac 1N} \,\frac{2^{\frac 1N}}{2^{\frac 1N}-1}\,.
\end{equation}
Notice that basically our argument consisted in using the fact that in $B_1$ one has
\begin{align}\label{analof}
m(t)\leq C\eps(t)^{\frac N{N-1}}\,, && \hbox{with } \eps(t) = m'(t)\,.
\end{align}
Concerning $B_2$, we can almost repeat the same argument: in fact, thanks to~(\ref{udu-}), for every $t\in B_2$ we have
\begin{align*}
\phi(t)=\sum_{i=1}^k\int_{\Om^-(t)}|Du_i|^2 \leq C_1\,\eps(t)^{\frac{1}{N-1}}\delta(t)\leq C_1\,\delta(t)^{\frac{N}{N-1}}\,, && 
\hbox{with } \delta(t) = \phi'(t)\,.
\end{align*}
which is the perfect analogous of~(\ref{analof}) with $\delta$ and $\phi$ in place of $\eps$ and $m$ respectively. Since as already observed $\phi(\bar t) \leq \sum \int_\Omega |Du_i|^2 \leq kK$, in analogy with~(\ref{defB1n}) we can define
\[
B_2^n := \bigg\{ t\in B_2:\, \frac{kK}{2^{n+1}} < \phi(t) \leq \frac{kK}{2^n} \bigg\}\,,
\]
thus the very same argument which lead to~(\ref{estB1}) now gives
\begin{equation}\label{estB2}
\big| B_2 \big| = \sum_{n\in\N} \big| B_2^n\big| 
\leq C \big(kK\big)^{\frac 1N} \sum_{n\in\N} \big(2^{-\frac 1N}\big)^n
= C \big(kK\big)^{\frac 1N} \,\frac{2^{\frac 1N}}{2^{\frac 1N}-1}\,.
\end{equation}
Putting~(\ref{estA}), (\ref{estB1}) and~(\ref{estB2}) together, we find
\begin{equation}\label{almostR_1}
\big| A \big| +\big| B \big| \leq C_5 = C_5(k,K,N)\,.
\end{equation}
We will prove the validity of the lemma with the following choice of $R_1$ and $\Gamma_1$,
\begin{align*}
R_1=2C_5+4\,, && \Gamma_1=2^{[K/\eta]+1}\,,
\end{align*}
where $C_5=C_5(k,K,N)$ and $\eta=\eta(k,K,N)$ have been introduced in~(\ref{almostR_1}) and in Lemma~\ref{threeconditions} respectively. To obtain our proof, we will distinguish the possible cases for $\Omega$.

\case{I}{One has $\hat t=-\infty$.}
If this case happens, then condition~(3) of Lemma~\ref{threeconditions} never holds true, i.e., for every $t\leq \bar t$ either condition~(1) or~(2) holds. Recalling the definition of $A$ and $B$ and~(\ref{almostR_1}), we deduce that $W(\Om,0,\mh)\leq C_5$. Therefore, the claim of Lemma~\ref{lemmatail} is immediately obtained simply taking $\Omh=\Omega$, since $R_1\geq C_5$ and $\Gamma_1\geq 1$.\par

\case{II}{One has $\hat t>-\infty$.}
In this case, let us notice that it must be $m(\hat t)> 0$, hence $(\hat t,\bar t)\subseteq A\cup B$ and thus by~(\ref{almostR_1}) $\hat t \geq \bar t - C_5$. Let us now pick some $t^\star \in [\hat t -1, \hat t]$ for which condition~(3) holds, and define $U_1:= \Omh(t^\star)$. By definition, $U_1$ has unit volume, and $\lambda_i(U_1)< \lambda_i(\Om)$ for every $1\leq i \leq k$, being condition~(3) true for $t^\star$.\par
Observe now that by definition for every $2\leq p \leq N$ one has $\pi_p\big(\Omt(t^\star)\big)= \pi_p\big(\Omega^+(t^\star)\big)$, hence
\[\begin{split}
{\rm diam} \big(\pi_p(U_1)\big) &= {\rm diam} \Big(\pi_p\big(\Omh(t^\star)\big)\Big)
={\rm diam} \Big(\pi_p \Big(\big| \Omt(t^\star) \big|^{-\frac 1N} \Omt(t^\star)\Big)\Big)
\leq 2 \, {\rm diam} \Big(\pi_p\big(\Omt(t^\star)\big)\Big)\\
&= 2 \, {\rm diam} \Big(\pi_p\big(\Omega^+(t^\star)\big)\Big)
\leq 2 \, {\rm diam} \big(\pi_p(\Omega)\big)\,,
\end{split}\]
where we have used that $\big| \Omt(t^\star)\big|\geq 1/2$. Concerning the widths of $U_1$ and $\Omega$, we can start observing that
\[
W\big(U_1,\mh,1\big) = \big| \Omt(t^\star) \big|^{-\frac 1N} \bigg( W\Big(\Omt(t^\star),\mh\big| \Omt(t^\star) \big|,\big| \Omt(t^\star) \big|\Big)\bigg)\,.
\]
Moreover, since it is admissible to assume $\nu^{\frac N{N-1}}< \frac \mh 2$ and then
\[
\big| \Omt(t^\star)^l_{t^\star} \big| = \big| \eps(t^\star)^{\frac N{N-1}}\big| < \big|\Omt(t^\star)\big| \mh\,,
\]
we have
\[
\tau\Big(\Omt(t^\star), \mh\big| \Omt(t^\star) \big|\Big) = \tau\Big(\Om, \mh\big| \Omt(t^\star)\big| + 1 - \big| \Omt(t^\star)\big|\Big)\,;
\]
as a consequence, we evaluate
\[\begin{split}
W\Big(\Omt(t^\star),\mh\big| \Omt(t^\star) \big|,\big| \Omt(t^\star) \big|\Big)
&=\tau\Big(\Omt(t^\star),\big|\Omt(t^\star)\big|\Big)-\tau\Big(\Omt(t^\star),\mh\big|\Omt(t^\star)\big|\Big)\\
&=\tau\big(\Om,1\big)-\tau\Big(\Om,\mh\big|\Omt(t^\star)\big|+1-\big|\Omt(t^\star)\big|\Big)
\leq\tau\big(\Om,1\big)-\tau\big(\Om,\mh\big)\\
&=W\big(\Om,\mh,1\big)\,,
\end{split}\]
thus we deduce, again recalling $\big| \Omt(t^\star)\big|\geq 1/2$, that
\[
W\big(U_1,\mh,1\big) \leq 2  W\big(\Om,\mh,1\big)\,.
\]

Summarizing, we have found that
\begin{align}\label{thistrue}
\lambda_i(U_1)< \lambda_i(\Omega)\,,
&&{\rm diam} \big(\pi_p(U_1)\big) \leq 2 \, {\rm diam} \big(\pi_p(\Omega)\big)\,,&&
 W\big(U_1,\mh,1\big) \leq 2 W\big(\Om,\mh,1\big)\,.
\end{align}
As a consequence, the choice $\Omh=U_1$ satisfies all the requirements of Lemma~\ref{lemmatail}, except possibly condition~(\ref{estimatetail}). To deal with this last condition, we need to further subdivide this case.

\case{IIa}{One has $\hat t>-\infty$ and $m(\hat t)< \mh$.}
In this case, we can show that the choice $\Omh=U_1$ actually works. As noticed above, we have only to prove the validity of~(\ref{estimatetail}). To do so, we assume for simplicity that $\bar t=0$, which is clearly admissible by translation. Hence, $t^\star \geq \hat t-1\geq \bar t - C_5 -1= -C_5-1$, and thus
\[
\Omt(t^\star) = \Omega^+(t^\star) \cup Q(t^\star) \subseteq \big\{ (x,y):\, x> t^\star - 1\big\}
\subseteq\big\{ (x,y):\, x> -C_5 -2\big\}\,.
\]
Recalling that $\big|\Omt(t^\star)\big| \geq \big| \Omega^+(t^\star)\big|\geq |\Omega^+(\bar t)|= 1 - 2\mh\geq 1/2$, we deduce that
\begin{equation}\label{finhe}
\Omh=\Omh(t^\star)= \big| \Omt(t^\star) \big|^{-\frac 1N} \Omt(t^\star) \subseteq 2 \,\Omt(t^\star)
\subseteq\big\{ (x,y):\, x> -2C_5 -4\big\}\,.
\end{equation}
Moreover, since $m(\hat t)< \mh$,
\[\begin{split}
\big|\Omh^l_0\big| &= \big|\Omh(t^\star)^l_0\big| \geq 
\big|\Omt(t^\star)^l_0\big| = \Big|\Big(\Omega^+(t^\star) \cup Q(t^\star)\Big)^l_0\Big|
\geq \Big|\Big(\Omega^+(t^\star) \Big)^l_0\Big|\\
&= \Big| \big\{ (x,y)\in\Omega:\, t^\star < x <0\big\}\Big|
=m(0) - m(t^\star) \geq m(0) - m(\hat t) \geq \mh\,,
\end{split}\]
and this implies that $\tau(\Omh,\mh)\leq 0$. The inclusion~(\ref{finhe}) ensures then that $\tau(\Omh,0)\geq -2 C_5 - 4$, and then~(\ref{estimatetail}) holds true since $R_1= 2C_5 +4$.

\case{IIb}{One has $\hat t>-\infty$ and $m(\hat t)\geq \mh$.}
We have now to face the last possible case, namely, when $\hat t$ is finite but $m(\hat t)\geq\mh$. In this case, thanks to Lemma~\ref{threeconditions} the estimates~(\ref{thistrue}) can be strengthened as
\begin{align}\label{thistrue2b}
\lambda_i(U_1)< \lambda_i(\Omega)-\eta\,,
&&{\rm diam} \big(\pi_p(U_1)\big) \leq 2 \, {\rm diam} \big(\pi_p(\Omega)\big)\,,&&
 W\big(U_1,\mh,1\big) \leq 2  W\big(\Om,\mh,1\big)\,.
\end{align}
Concerning the validity of~(\ref{estimatetail}), it does not follow by~(\ref{finhe}) because the assumption $m(\hat t)\geq \mh$ does not imply that $\tau(\Omh,\mh)\leq 0$. However, we can argue as follow: if~(\ref{estimatetail}) holds true for $U_1$, then of course we are done by setting $\Omh=U_1$. Otherwise, we apply the above construction to the set $U_1$ in place of $\Omega$: since $U_1$ does not satisfy~(\ref{estimatetail}), then Case~I is impossible, thus we are in Case~II and then by~(\ref{thistrue}) we find an open set $U_2$ of unit measure such that
\[\left\{\begin{array}{l}
\lambda_i(U_2)< \lambda_i(U_1) < \lambda_i(\Omega)-\eta \quad \forall\, 1\leq i \leq k\,, \\
{\rm diam} \big(\pi_p(U_2)\big) \leq 2 \, {\rm diam} \big(\pi_p(U_1)\big)\leq 4 \, {\rm diam} \big(\pi_p(\Omega)\big)\,,\\
W\big(U_2,\mh,1\big)\leq 2 W\big(U_1,\mh,1\big) \leq 4 W\big(\Om,\mh,1\big)\,.
\end{array}\right.\]
If $U_1$ is in Case~IIa, then as before we are done with the choice of $\Omh=U_2$. Otherwise, if we are in Case~IIb, then~(\ref{thistrue2b}) becomes
\begin{align*}
\lambda_i(U_2)< \lambda_i(\Omega)-2 \eta\,,
&&{\rm diam} \big(\pi_p(U_2)\big) \leq 4 \, {\rm diam} \big(\pi_p(\Omega)\big)\,,&&
 W\big(U_2,\mh,1\big) \leq 4  W\big(\Om,\mh,1\big)\,.
\end{align*}
Going on with the obvious iteration, if the proof has not been concluded after $\ell\in \N$ steps then we have found an open set $U_\ell$ satisfying
\begin{align*}
\lambda_i(U_\ell)< \lambda_i(\Omega)-\ell \eta\,,
&&{\rm diam} \big(\pi_p(U_\ell)\big) \leq 2^\ell \, {\rm diam} \big(\pi_p(\Omega)\big)\,,&&
 W\big(U_\ell,\mh,1\big) \leq 2^\ell  W\big(\Om,\mh,1\big)\,.
\end{align*}
This is of course impossible if $\ell \eta \geq K$, being $\lambda_k(\Omega)\leq K$: as a consequence, our iteration must stop after less than $K/\eta$ steps, thus our thesis is concluded with our choice of $\Gamma_1$.
\end{proof}

\subsection{Boundedness of the interior}

The goal of this subsection is to obtain a uniform bound also for the interior part of a set $\Omega$, in the sense of Lemma~\ref{lemmatail}. Most of the arguments of this case will be identical to those that we made for the tails in Section~\ref{sub1}, but some modifications are essential. 
In particular we give new definitions for $\eps,$ $\de_i$, $\mu_i$, $\Omt(t)$ and $\Omh(t)$ in order to mantain clear the analogy with what was done in Section~\ref{sub1}. 
The result that we are going to prove is the following.
\begin{lemma}\label{lemmainterior}
For every $K>0$ there exist $R_2=R_2(k,K,N)$ and $\Gamma_2=\Gamma_2(k,K,N)$ such that, for any open set $\Omega\subseteq \R^N$ of unit volume and with $\lambda_k(\Omega)\leq K$, and for any choice of $\om\in (\mh, 1- \frac \mh 2)$, there exists another open set $\Omh\subseteq\R^N$, still of unit volume, such that $\lambda_i(\Omh)\leq \lambda_i(\Om)$ for every $1\leq i \leq k$, and such that for every $2\leq p \leq N$
\begin{align}\label{estimateinterior}
W\big(\Omh,0,\om\big) \leq R_2+\Gamma_2 W\big(\Om, 0, \om- \mh\big)\,, &&
{\rm diam} \big(\pi_p(\Omh)\big)\leq \Gamma_2\, {\rm diam} \big(\pi_p(\Omega)\big)\,.
\end{align}
\end{lemma}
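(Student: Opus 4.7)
The plan is to mimic Section~\ref{sub1} with three adaptations: we trim an \emph{interior} slab of $\Om$ rather than its leftmost tail, we adjust the auxiliary quantities $\eps,\de_i,\mu_i,m$ to this new region, and we redefine $\Omt(t)$ so as to re-glue the two remaining pieces of $\Om$ through a thin cylinder. Concretely, I fix $\underline t := \tau(\Om,\om-\mh)$ and $\tb := \tau(\Om,\om)$; the slab to be modified is $\{(x,y)\in\Om:\underline t<x\leq \tb\}$, of volume exactly $\mh$. For $t\in[\underline t,\tb]$ I set $m(t):=|\Om^l_t|-(\om-\mh)$ (so $m(\underline t)=0$, $m(\tb)=\mh$) and $\eps(t):=\hc^{N-1}(\Om_t)$, and define $\de_i,\mu_i,\de,\phi$ analogously to~(\ref{int2}) and~(\ref{defphi}) using only the slice $\Om_t$.

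The new set $\Omt(t)$ is obtained by deleting $\{\underline t<x\leq t\}$ from $\Om$ and gluing $\Om^l_{\underline t}$ to a horizontally translated copy of $\Om^r_t$ through a thin cylinder $Q(t)$ of length $\sigma(t):=\eps(t)^{1/(N-1)}$ and base $\Om_{\underline t}\cup\Om_t$; thus $\Om^r_t$ is shifted leftward by $t-\underline t-\sigma(t)$ to attach to the right face of $Q(t)$. The test function $\ut_i\in W^{1,2}_0(\Omt(t))$ agrees with $u_i$ on $\Om^l_{\underline t}$ and, up to the same shift, on the translated $\Om^r_t$, and on $Q(t)$ it interpolates linearly in $x$ between $u_i(\underline t,\cdot)$ extended by zero on $\Om_t\setminus\Om_{\underline t}$ and $u_i(t,\cdot)$ extended by zero on $\Om_{\underline t}\setminus\Om_t$. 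A direct count gives $|\Omt(t)|=1-m(t)+\eps(t)^{N/(N-1)}$, matching the volume identity used in~(\ref{muchhere}).

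With these choices, the analogues of Lemmas~\ref{primastima},~\ref{lemmatest},~\ref{noth},~\ref{lemmanu} and~\ref{threeconditions} go through essentially line by line. Indeed, the key bound~(\ref{udu-}) on the excised slab still follows from a reflection and Faber--Krahn argument on a set of doubled volume at most $2\mh$, so~(\ref{defmh}) continues to apply; the Rayleigh-quotient and orthogonality estimates in Lemmas~\ref{lemmatest} and~\ref{noth} again reduce to boundary integrals over $\Om_{\underline t}$ and $\Om_t$, controlled by $\eps(t)^{1/(N-1)}\de_i(t)$ via~(\ref{muest}) (with the analogous contributions coming from $\underline t$ absorbed into the same order thanks to the choice of $\sigma$); and the min--max argument of Lemma~\ref{lemmanu} is unaltered. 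One thus obtains the trichotomy: at each $t\in[\underline t,\tb]$ either (1) $\max\{\eps(t),\de(t)\}>\nu$, or (2) $m(t)\leq C_4(\eps(t)+\de(t))\eps(t)^{1/(N-1)}$, or (3) $\la_i(\Omh(t))<\la_i(\Om)$ for every $1\leq i\leq k$ (strictly with uniform gap $\eta$ when $m(t)\geq \mh/2$), where $\Omh(t):=|\Omt(t)|^{-1/N}\Omt(t)$.

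The conclusion then follows by the same three-case iteration as in the proof of Lemma~\ref{lemmatail}. The measure estimates~(\ref{estA}),~(\ref{estB1}),~(\ref{estB2}) yield $|A|+|B|\leq C_5(k,K,N)$ on $[\underline t,\tb]$. In Case~I (condition~(3) never holds) we take $\Omh=\Om$ and~(\ref{estimateinterior}) reduces to $W(\Om,0,\om)\leq W(\Om,0,\om-\mh)+C_5$. In Cases~IIa/IIb we replace $\Om$ by $U_1=\Omh(t^\star)$, whose $p$-projections and whose ``first $\om-\mh$'' width dilate by at most the factor $|\Omt(t^\star)|^{-1/N}\leq 2$, while $\la_i(U_1)\leq \la_i(\Om)$ (strictly with gap $\eta$ in Case~IIb). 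The gap in Case~IIb forces at most $\lceil K/\eta\rceil+1$ iterations, giving $\Gamma_2 = 2^{\lceil K/\eta\rceil+1}$ and $R_2$ of the same order as $C_5$. The main technical obstacle, beyond careful bookkeeping, is the cylinder gluing: the choice of base $\Om_{\underline t}\cup\Om_t$ generates extra contributions from the lip $\Om_t\triangle\Om_{\underline t}$ in the analogue of Lemma~\ref{lemmatest}, and one must check that these are absorbed (by a suitable choice of $\sigma$, or by restricting to the regime where $\eps(\underline t)$ is not substantially larger than $\eps(t)$) so that the estimates retain the order $\eps(t)^{1/(N-1)}\de_i(t)$.
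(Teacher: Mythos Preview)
There is a genuine gap in your analogue of Lemma~\ref{primastima}. You assert that ``the key bound~(\ref{udu-}) on the excised slab still follows from a reflection and Faber--Krahn argument on a set of doubled volume at most $2\mh$''. But reflection works in Section~\ref{sub1} only because $u_i$ vanishes on the \emph{lateral} boundary of the tail $\Om^l_t$: on an interior slab $\{\underline t<x<t\}$, the eigenfunction $u_i$ does \emph{not} vanish on $\Om_{\underline t}$, so the reflected function is not in $W^{1,2}_0$ of the symmetrized set and the Faber--Krahn lower bound on $\la_1$ cannot be invoked. The paper flags exactly this point (``to show the analogous of Lemma~\ref{primastima} we cannot rely on the symmetrization of $\Omega^-(t)$'') and replaces reflection by the construction of Lemma~\ref{newass}: one attaches \emph{external} cylinders $Q_1,Q_2$ to both faces of the slab and ramps $u_i$ linearly down to zero on them, obtaining a genuine $W^{1,2}_0$ function on a set of volume $m(t)+\eps(t)^{N/(N-1)}$; Faber--Krahn then gives~(\ref{wtebte}), and combining with the integration-by-parts estimate~(\ref{wtebte2}) yields~(\ref{2udu-}). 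This step requires the extra hypothesis $\eps(t),\de(t)\leq\nu$, which is why in the interior case the analogue of~(\ref{udu-}) is only available under condition~(1) failing.

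A second, related difficulty is the one you yourself raise at the end: your one-sided slab has a \emph{fixed} left face $\Om_{\underline t}$ and a moving right face $\Om_t$, and your $\eps(t),\de_i(t)$ see only $\Om_t$. The boundary contributions from $\Om_{\underline t}$---both in the cylinder estimates and in the integration by parts on the slab---are governed by $\H^{N-1}(\Om_{\underline t})$ and $\int_{\Om_{\underline t}}|Du_i|^2$, which are fixed numbers not made small by the trichotomy at level $t$; your suggestion to ``restrict to the regime where $\eps(\underline t)$ is not substantially larger than $\eps(t)$'' has no mechanism, since $\underline t$ is fixed. The paper avoids this entirely by using a \emph{symmetric} expanding slab centred at $t_0=\tfrac12\big(\tau(\Om,\om-\mh)+\tau(\Om,\om+\tfrac{\mh}{2})\big)$, with $\Om^-(t)=\{t_0-t<x<t_0+t\}$ and $\eps(t),\de_i(t)$ defined as the \emph{sum} of the contributions from both moving faces $\Om_{t_0\pm t}$. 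Then condition~(1) of Lemma~\ref{threebis} controls both faces at once, the two internal cylinders $Q_1,Q_2$ each carry only their own face, and all the estimates close with the same order $\eps(t)^{1/(N-1)}\de_i(t)$ without any lip term.
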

To start with, we give the analogous of the definitions~(\ref{int0}), (\ref{int1}) and~(\ref{int2}) of Section~\ref{sub1} that we need now; Figure~\ref{intern} helps to visualize the new situation. More precisely, we set for brevity
\begin{align*}
t_0 := \frac{\tau(\Om,\om+\frac \mh 2)+\tau(\Om,\om-\mh)}2\,, && \bar t:=\frac{\tau(\Om,\om+\frac \mh 2)-\tau(\Om,\om-\mh)}2\,;
\end{align*}
keep in mind that, since $\om\in (\mh,1-\frac\mh 2)$, then $-\infty < \tau(\Om,\om-\mh)<\tau(\Om,\om+\frac \mh 2) < +\infty$. For any $0\leq t \leq \bar t$, we define
\begin{align*}
\Om^+(t)&:= \Om^l_{t_0-t} \cup \Om^r_{t_0+t}\,, & \Omega^-(t) &:= \Om^r_{t_0-t} \cap \Om^l_{t_0+t}=\Omega\setminus\Om^+(t)\,, \\
\eps(t)&:=\hc^{N-1}(\Om_{t_0-t})+\hc^{N-1}(\Om_{t_0+t})\,, &
m(t) &:= \big| \Omega^-(t) \big| = \int_0^t \eps(s)\,ds\leq \frac 32\,\mh\,.
\end{align*}

Moreover, having fixed an orthonormal set $\big\{ u_1,\, u_2,\, \dots\,,\, u_k\big\}$ of eigenfunctions with unit $L^2$ norm corresponding to the first $k$ eigenvalues of $\Omega$, for every $1\leq i \leq k$ and $0\leq t \leq \bar t$ we define
\begin{align*}
\ \de_i(t):=\int_{\Om_{t_0-t}} |Du_i|^2+\int_{\Om_{t_0+t}} |Du_i|^2\,, && 
\mu_i(t):=\int_{\Om_{t_0-t}} u_i^2+\int_{\Om_{t_0+t}} u_i^2\,.\ 
\end{align*}
In analogy with~(\ref{defphi}), we define again $\delta(t)=\sum_{i=1}^k \delta_i(t)$, and we set again
\[
\phi(t) := \sum_{i=1}^k \int_{\Omega^-(t)}  |Du_i|^2 = \int_0^t \delta(s)\,ds \,.
\]
\begin{figure}[htbp]
\begin{center}
\input{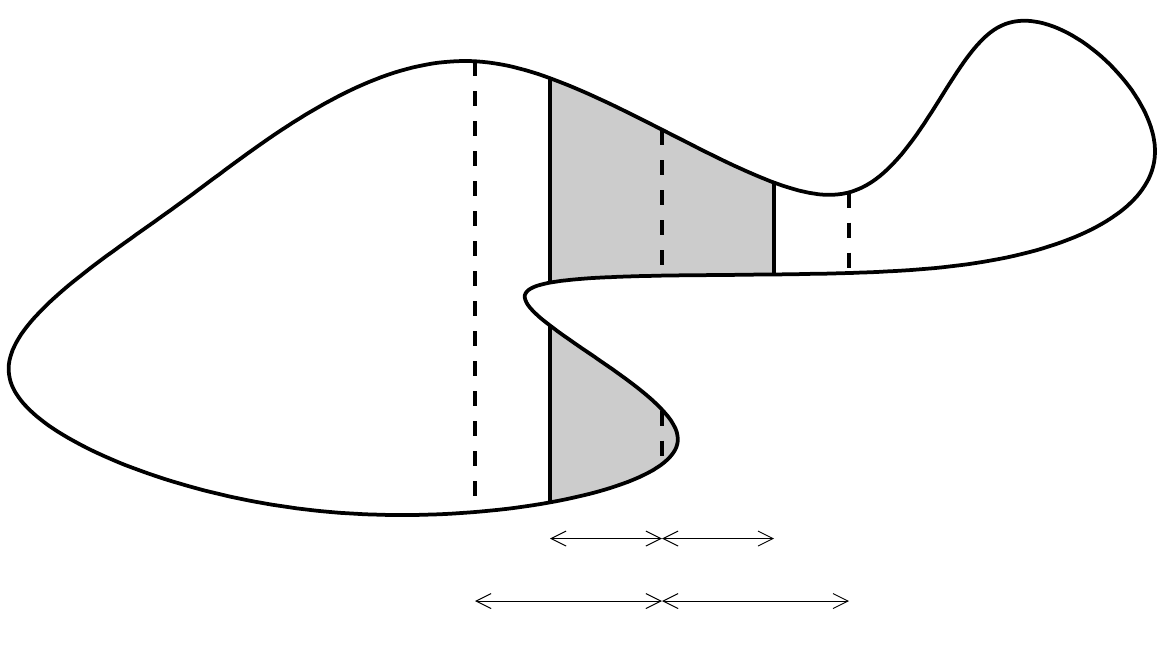_t}
\caption{A set $\Omega$ and the corresponding quantities $t_0,\, \bar t$ and sets $\Om^+(t)$ (white) and $\Om^-(t)$ (shaded).}
\label{intern}
\end{center}
\end{figure}
Our strategy to prove Lemma~\ref{lemmainterior} is very similar to what we did to show Lemma~\ref{lemmatail}; in fact, basically the only difference is that to show the analogous of Lemma~\ref{primastima} we cannot rely on the symmetrization of $\Omega^-(t)$. Let us see how to overcome this difficulty.

\begin{lemma}\label{newass}
There exists a small constant $\nu=\nu(k,K,N)<1$ such that, if $\Omega$ and $\om$ are as in the assumptions of Lemma~\ref{lemmainterior}, and $0\leq t \leq \bar t$ is such that $\eps(t),\, \delta(t)\leq \nu$, then for every $1\leq i \leq k$ one has 
\begin{align}\label{2udu-}
\int_{\Om^-(t)} u_i^2 \leq C \eps(t)^{\frac{1}{N-1}}\de_i(t)\,, &&
\int_{\Om^-(t)} |Du_i|^2 \leq C\eps(t)^{\frac{1}{N-1}}\de_i(t)\,.
\end{align}
\end{lemma}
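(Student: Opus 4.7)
The plan is to mirror the two-step structure of Lemma~\ref{primastima}: first the $L^2$-estimate via Faber--Krahn on a suitably enlarged set, then the gradient estimate via integration by parts using $-\Delta u_i=\lambda_i u_i$. The main obstacle is that the one-sided reflection employed in Lemma~\ref{primastima} fails here, because $\Om^-(t)$ is a middle slab with \emph{two} open slice boundaries $\Om_{t_0-t}$ and $\Om_{t_0+t}$, and no single reflection closes up both.

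To bypass this I would mimic the cylinder--extension device of~(\ref{defcyl})--(\ref{utilde}). Set $\sigma=\eps(t)^{1/(N-1)}$, attach the two thin cylinders $Q_L:=(t_0-t-\sigma,t_0-t)\times \Om_{t_0-t}$ and $Q_R:=(t_0+t,t_0+t+\sigma)\times \Om_{t_0+t}$ on the two open slices, and define $\ut_i$ on $Q_L\cup\Om^-(t)\cup Q_R$ to coincide with $u_i$ on $\Om^-(t)$ and to interpolate linearly in $x$ down to zero on each of the two cylinders, in complete analogy with~(\ref{utilde}); then $\ut_i\in W^{1,2}_0(Q_L\cup\Om^-(t)\cup Q_R)$. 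The two-slice analogue of~(\ref{muest}), namely
\[
\mu_i(t)\leq C\eps(t)^{\frac{2}{N-1}}\delta_i(t)\,,
\]
still holds by applying Faber--Krahn in $\R^{N-1}$ on $\Om_{t_0-t}$ and on $\Om_{t_0+t}$ separately and using $\hc^{N-1}(\Om_{t_0\pm t})\leq\eps(t)$, and the computation of Lemma~\ref{lemmatest}, applied on each cylinder, still gives $\int_{Q_L\cup Q_R}|D\ut_i|^2\leq C\eps(t)^{1/(N-1)}\delta_i(t)$.

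Once these are in place, I would apply Faber--Krahn to $\ut_i$ on the enlarged set $Q_L\cup\Om^-(t)\cup Q_R$. Since $m(t)\leq\tfrac32 \mh$ and the two cylinders contribute a total volume $\sigma\eps(t)=\eps(t)^{N/(N-1)}\leq \nu^{N/(N-1)}$, choosing $\nu=\nu(k,K,N)$ small enough forces the enlarged set to have volume at most $2\mh$, and~(\ref{defmh}) then yields the prefactor bound $(2\mh)^{2/N}K/\lambda_1(B_N)\leq 1/2$. Arguing next as in~(\ref{eq2}), but now with boundary contributions on \emph{both} slices --- controlled by $\sqrt{\mu_i(t)\delta_i(t)}$ via the elementary inequality $\sqrt{a_1 b_1}+\sqrt{a_2 b_2}\leq\sqrt{(a_1+a_2)(b_1+b_2)}$ --- one gets
\[
\int_{\Om^-(t)}|Du_i|^2\leq K\int_{\Om^-(t)} u_i^2 + C\sqrt{\mu_i(t)\delta_i(t)}\,.
\]
Combining the Faber--Krahn estimate with this bound and with the cylinder estimate for $\int_{Q_L\cup Q_R}|D\ut_i|^2$, the term $K\int_{\Om^-(t)} u_i^2$ is absorbed on the left by~(\ref{defmh}); using the displayed analogue of~(\ref{muest}) to bound $\sqrt{\mu_i(t)\delta_i(t)}\leq C\eps(t)^{1/(N-1)}\delta_i(t)$ then yields the first inequality of~(\ref{2udu-}). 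Plugging it back into the integration-by-parts estimate gives the second.
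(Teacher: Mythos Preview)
Your proposal is correct and follows essentially the same route as the paper: attach two external cylinders to close off the slab $\Om^-(t)$, extend $u_i$ linearly to zero on them, apply Faber--Krahn on the enlarged set, and combine with the integration-by-parts identity to absorb $K\int_{\Om^-(t)}u_i^2$ via~(\ref{defmh}). The only cosmetic difference is that the paper uses two separate widths $\sigma_1=\hc^{N-1}(\Om_{t_0-t})^{1/(N-1)}$ and $\sigma_2=\hc^{N-1}(\Om_{t_0+t})^{1/(N-1)}$ rather than your common $\sigma=\eps(t)^{1/(N-1)}$, but since $\sigma\geq\max(\sigma_1,\sigma_2)$ both choices yield the same cylinder bound $\int_{Q_L\cup Q_R}|D\ut_i|^2\leq C\eps(t)^{1/(N-1)}\delta_i(t)$ and the same volume control, so this is immaterial.
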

\begin{proof}
Consider the ``external cylinders''
\begin{align*}
Q_1 := \big(t_0-t-\sigma_1, t_0-t\big) \times \Omega_{t_0-t}\,, &&
Q_2 := \big(t_0+t,t_0+t+\sigma_2\big) \times \Omega_{t_0+t}\,,
\end{align*}
where
\begin{align*}
\sigma_1= \hc^{N-1}(\Om_{t_0-t})^{\frac 1{N-1}}\,, &&
\sigma_2= \hc^{N-1}(\Om_{t_0+t})^{\frac 1{N-1}}\,,
\end{align*}
in perfect analogy with~(\ref{defcyl}) and~(\ref{defsigma}). Calling $U=\Omega^-(t)\cup Q_1 \cup Q_2$, we can extend~(\ref{utilde}) to obtain the following definition of $\ut_i \in W^{1,2}_0(U)$,
\[
\ut_i(x,y):=\left\{
\begin{array}{ll}
u_i(x,y) &\hbox{if $(x,y)\in \Omega^-(t)$}\,, \\[5pt]
\bal\frac{x-(t_0-t-\sigma_1)}{\sigma_1}\,u_i(t_0-t,y)\eal\qquad &\hbox{if $(x,y)\in Q_1$}\,, \\[10pt]
\bal\frac{(t_0+t+\sigma_2)-x}{\sigma_2}\,u_i(t_0+t,y)\eal &\hbox{if $(x,y)\in Q_2$}\,.
\end{array}
\right.
\]
Applying Lemma~\ref{lemmatest} to the two cylinders $Q_1$ and $Q_2$, and comparing the present definitions of $\eps$ and $\delta_i$ with those that we used in Section~\ref{sub1}, (\ref{newtest}) gives us
\[
\int_{Q_1\cup Q_2}|D\ut_i|^2 \leq C_2\eps(t)^{\frac{1}{N-1}}\de_i(t)\,.
\]
We can then obtain an estimate between $\int_{\Om^-(t)} u_i^2$ and $\int_{\Om^-(t)} |Du_i|^2$ similar to~(\ref{eq1}), first noticing that
\[
\frac{\lambda_1(B_N)}{\big(m(t)+\eps(t)^{\frac N{N-1}}\big)^{\frac{2}{N}}} \leq \frac{\lambda_1(B_N)}{|U|^{\frac{2}{N}}}
\leq \la_1(U) \leq \rc(\ut_i,U)
\leq \frac{\bal \int_{\Om^-(t)} |Du_i|^2 + \int_{Q_1\cup Q_2} |D\ut_i|^2\eal}{\bal \int_{\Om^-(t)} u_i^2 \eal}\,,
\]
and then deducing, recalling~(\ref{defmh}) and choosing $\nu$ small enough,
\begin{equation}\label{wtebte}\begin{split}
\int_{\Om^-(t)} u_i^2 &\leq \frac{\big(m(t)+\eps(t)^{\frac N{N-1}}\big)^{\frac 2N}}{\lambda_1(B_N)}\, \bigg(\int_{\Om^-(t)} |Du_i|^2 + C_2\eps(t)^{\frac{1}{N-1}}\de_i(t) \bigg)\\
&\leq \frac{1}{2^{1+\frac 1N}K} \int_{\Om^-(t)} |Du_i|^2 +\frac{C_2}{2^{1+\frac 1N}K}\,\eps(t)^{\frac{1}{N-1}}\de_i(t)\,.
\end{split}\end{equation}
Observe that $\nu=\nu(k,K,N)$ can be chosen so small that the last estimate implies 
\[
\int_{\Om^-(t)} u_i^2 \leq \frac 12\,.
\]
We can also generalize~(\ref{eq2}); in fact, since the very same argument used in~(\ref{muest}) again ensures that $\mu_i(t)\leq C \eps(t)^{\frac 2{N-1}} \delta_i(t)$, we can obtain
\begin{equation}\label{wtebte2}\begin{split}
\int_{\Om^-(t)} |Du_i|^2 &=  \int_{\Om^-(t)} \lambda_i u_i^2 + \int_{\Om_{t_0-t}\cup \Om_{t_0+t}}  u_i \,\frac{\partial u_i}{\partial \nu}
\leq K \int_{\Om^-(t)} u_i^2 + \sqrt{\mu_i(t)\delta_i(t)}\\
&\leq K \int_{\Om^-(t)} u_i^2 + C \eps(t)^{\frac 1{N-1}} \delta_i(t)\,.
\end{split}\end{equation}
Putting together~(\ref{wtebte}) and~(\ref{wtebte2}) gives~(\ref{2udu-}).
\end{proof}
We need now to extend the result of Lemma~\ref{lemmanu} to our new setting. To do so, going on in analogy with Section~\ref{sub1}, we give the following definition.
\begin{defin}
Let $\Omega$, $\om$ and $1\leq t \leq \bar t$ be as in the assumptions of Lemma~\ref{newass}. Consider the ``internal cylinders''
\begin{align*}
Q_1 := \big(t_0-t, t_0-t+\sigma_1\big) \times \Omega_{t_0-t}\,, &&
Q_2 := \big(t_0+t-\sigma_2,t_0+t\big) \times \Omega_{t_0+t}\,,
\end{align*}
where
\begin{align*}
\sigma_1= \hc^{N-1}(\Om_{t_0-t})^{\frac 1{N-1}}\,, &&
\sigma_2= \hc^{N-1}(\Om_{t_0+t})^{\frac 1{N-1}}\,,
\end{align*}
and notice that by the assumption on $\eps(t)$ and the fact that $t\geq 1$ one has $Q_1\cap Q_2=\emptyset$. The set $\Omt(t)$ is defined as
\[\begin{split}
\Omt(t):= \Big\{(x,y)\in \R^N: \hbox{either } & x\leq t_0,\, \big(x-t+\sigma_1,y\big)\in \Om^+(t)\cup Q_1\,, \\
\hbox{or } & x\geq t_0,\, \big(x+t-\sigma_2,y\big)\in \Om^+(t)\cup Q_2\Big\}\,,
\end{split}\]
see Figure~\ref{newOmtt}. 
Notice that
\[\begin{split}
\big| \Omt(t)\big| &= \big| \Om^+(t)\big| + \big| Q_1 \big| + \big| Q_2 \big| = 1 - m(t) + \H^{N-1}\big(\Omega_{t_0-t}\big)^{\frac N{N-1}}+ \H^{N-1}\big(\Omega_{t_0+t}\big)^{\frac N{N-1}}\\
&\leq 1 - m(t) +\eps(t)^{\frac N{N-1}}\,.
\end{split}\]
Moreover, define again the rescaled set
\[
\Omh(t) := \big| \Omt(t) \big|^{-\frac 1N} \Omt(t)\,.
\]
\end{defin}

\begin{figure}[htbp]
\begin{center}
\input{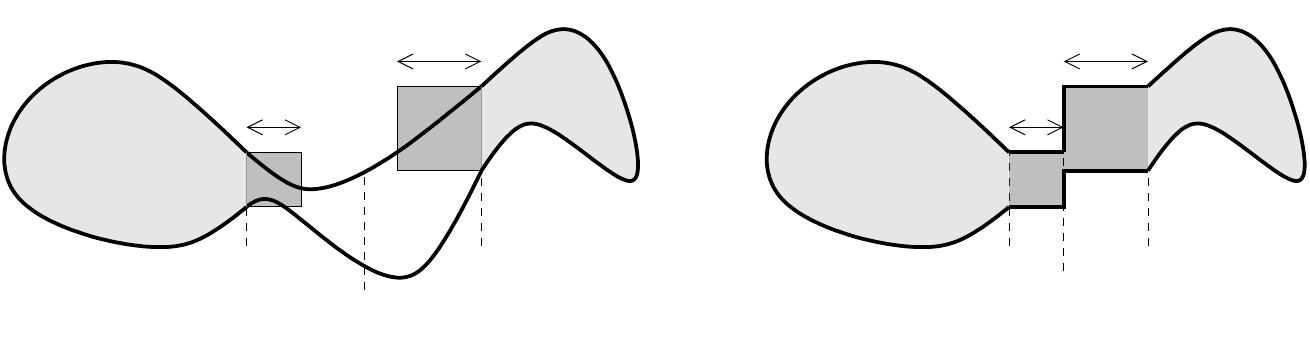_t}
\caption{A set $\Omega$ and the corresponding set $\Omt(t)$.}
\label{newOmtt}
\end{center}
\end{figure}

With this definition of the sets $\Omt(t)$ and with the obvious extension of (\ref{utilde}) in order to define $\ut_i\in W^{1,2}_0(\Omt(t))$, we can now literally repeat the proofs of Lemmas~\ref{lemmatest}, \ref{noth}, \ref{lemmanu} and~\ref{threeconditions}, the unique difference being the substitution of $Q(t)$ with $Q_1\cup Q_2$, and of $\Omega_t$ with $\Omega_{t_0+t}\cup \Omega_{t_0-t}$. We obtain then the following result, which holds up to possibly decrease the constant $\nu=\nu(k,K,N)$ of Lemma~\ref{newass}.

\begin{lemma}\label{threebis}
Let $\Omega$ be as in the assumptions of Lemma~\ref{lemmainterior}, and let $1\leq t\leq \bar t$. There exists $C_6=C_6(k,K,N)$ such that exactly one of the three following conditions hold:
\begin{enumerate}
\item[(1) ] $\max\big\{ \eps(t),\, \delta(t) \big\} > \nu$;
\item[(2) ] (1) does not hold and $m(t) \leq C_6 \big( \eps(t) + \delta(t)\big) \eps(t)^{\frac 1{N-1}}$;
\item[(3) ] (1) and~(2) do not hold and for every $1\leq i \leq k$, one has $\lambda_i\big(\Omh(t)\big)< \lambda_i(\Omega)$.
\end{enumerate}
In particular, if condition~(3) holds for $t$ and $m(t)\geq \mh/2$, then for every $1\leq i \leq k$ one has $\lambda_i\big(\Omh(t)\big) < \lambda_i(\Om) - \eta$, being $\eta=\eta(k,K,N)>0$.
\end{lemma}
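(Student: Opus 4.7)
The plan is to follow the proof of Lemma~\ref{threeconditions} essentially verbatim, making only the geometric substitutions already announced in the paper: the single cylinder $Q(t)$ is replaced by the pair $Q_1 \cup Q_2$, and the single slice $\Omega_t$ by the union $\Omega_{t_0-t} \cup \Omega_{t_0+t}$. With these substitutions, the interior analogues of Lemmas~\ref{lemmatest}, \ref{noth} and~\ref{lemmanu} hold by literally the same arguments (each relevant integral is simply split across the two sides), so I will take them as given.

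Granting these, the proof runs in three steps. First, if condition~(1) holds there is nothing to do. Otherwise the interior version of Lemma~\ref{lemmanu} yields
\begin{equation*}
\lambda_i\big(\Omt(t)\big) \leq \lambda_i(\Omega) + C_3\, \eps(t)^{\frac{1}{N-1}}\, \delta(t)\qquad \forall\,1\leq i \leq k.
\end{equation*}
Second, combining the rescaling identity~(\ref{rescaling}) with the volume bound $|\Omt(t)| \leq 1 - m(t) + \eps(t)^{N/(N-1)}$ recorded in the definition of $\Omt(t)$, and expanding the $2/N$-th power to first order (legitimate because $m(t)$ and $\eps(t)$ are both $\leq C\nu \ll 1$), the same computation as in~(\ref{muchhere}) gives
\begin{equation*}
\lambda_i\big(\Omh(t)\big) \leq \lambda_i(\Omega) - \frac{2\lambda_1(B_N)}{N}\, m(t) + \frac{2K}{N}\, \eps(t)^{\frac{N}{N-1}} + \Big(C_3 + \frac{2}{N}\Big)\, \eps(t)^{\frac{1}{N-1}}\, \delta(t).
\end{equation*}
Third, setting $C_6 := 2(K+1)/N + C_3$, either the $m(t)$ term is dominated by the two remainder terms (so condition~(2) holds) or it is not, in which case the right-hand side is strictly less than $\lambda_i(\Omega)$ and condition~(3) holds.

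For the final quantitative claim, suppose condition~(3) holds with $m(t) \geq \mh/2$. The displayed inequality and the bounds $\eps(t)^{N/(N-1)},\, \eps(t)^{1/(N-1)}\delta(t) \leq C\nu^{N/(N-1)}$ give
\begin{equation*}
\lambda_i\big(\Omh(t)\big) - \lambda_i(\Omega) \leq -\frac{\lambda_1(B_N)\, \mh}{N} + C_6\, \nu^{\frac{N}{N-1}} \leq -\eta,
\end{equation*}
with $\eta := \lambda_1(B_N)\mh/(2N)$, provided $\nu$ is shrunk once more -- a harmless operation since decreasing $\nu$ does not alter $C_3$ or $C_6$.

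The only real obstacle. The one place in the tail argument that cannot simply be transplanted is Lemma~\ref{primastima}, whose proof hinges on reflecting $\Omega^-(t)$ across $\{x=t\}$ to apply Faber--Krahn to a symmetric set. Here $\Omega^-(t)$ is an internal slab rather than a tail, so reflection is unavailable. This is precisely why Lemma~\ref{newass} is needed: it recovers the same $L^2$-$H^1$ bound by attaching two external cylinders to the sides of $\Omega^-(t)$ and applying Faber--Krahn to $\Omega^-(t)\cup Q_1 \cup Q_2$. Once this genuinely new ingredient is in place, everything else is mechanical, and the above three-step repetition of the Section~\ref{sub1} argument goes through unchanged.
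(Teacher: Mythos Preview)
Your proposal is correct and matches the paper's own treatment essentially verbatim: the paper does not give a separate proof of Lemma~\ref{threebis} but simply states that one ``literally repeat[s] the proofs of Lemmas~\ref{lemmatest}, \ref{noth}, \ref{lemmanu} and~\ref{threeconditions}'' with the substitutions $Q(t)\mapsto Q_1\cup Q_2$ and $\Omega_t\mapsto \Omega_{t_0-t}\cup\Omega_{t_0+t}$, and you have written out exactly what that repetition amounts to. You also correctly isolate the one place where a genuinely new argument is needed---Lemma~\ref{newass} replacing the reflection trick of Lemma~\ref{primastima}---and adjust $\eta$ to $\lambda_1(B_N)\mh/(2N)$ to account for the threshold $m(t)\geq\mh/2$ rather than $\mh$.
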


We can now conclude this section by presenting the proof of Lemma~\ref{lemmainterior}, which will be a minor modification of the proof of Lemma~\ref{lemmatail}.

\begin{proof}[Proof of Lemma~\ref{lemmainterior}]
First of all, we want to show that it is admissible to assume
\begin{align}\label{admissible}
m(t)>0 && \forall \, t>0\,.
\end{align}
In fact, suppose that it is not so, and let $\tau=\max \{ 0\leq t\leq \bar t: m(t)=0 \}>0$. Then, $\Omega$ is the disjoint union of $\Omega \cap \{x> t_0+\tau\}$ and $\Omega \cap \{x< t_0-\tau\}$, and it does not intersect the whole strip $\{t_0-\tau<x<t_0+\tau\}$. Therefore, replacing $\Omega$ with $\big\{(x+\tau,y):\, x<t_0, (x,y)\in\Om\big\}\cup \big\{(x-\tau,y):\, x>t_0, (x,y)\in\Om\big\}$, that is, moving closer the two disjoint parts of $\Omega$, does not change any of the eigenvalues of $\Omega$ and is clearly admissible for the proof of the lemma; moreover, the property~(\ref{admissible}) of course holds true for this new set. Hence, from now on we directly assume that~(\ref{admissible}) holds true for $\Omega$.\par
Define now $\hat t$ analogously to~(\ref{defhatt}) by setting
\[
\hat t := \sup \Big\{ 1 \leq t \leq \bar t : \, \hbox{condition~(3) of Lemma~\ref{threebis} holds for $t$}\Big\}\,,
\]
with the convention that, if condition (3) is false for every $1\leq t\leq \overline{t},$ then $\widehat{t}=1$. Again we define $A$ and $B$ as
\begin{align*}
A:&= \Big\{ t\in (\hat t,\bar t):\, \hbox{condition~(1) of Lemma~\ref{threebis} holds for $t$} \Big\}\,,\\
B:&=\Big\{t\in (\hat t,\bar t):\, \hbox{condition~(2) of Lemma~\ref{threebis} holds for $t$ and $m(t)>0$} \Big\}\,.
\end{align*}
The same argument of the proof of Lemma~\ref{lemmatail} gives then
\begin{equation}\label{almostR_2}
\big| A \big| +\big| B \big| \leq C_7 = C_7(k,K,N)\,,
\end{equation}
and we are going to show the thesis with the choice
\begin{align*}
R_2=4C_7+8\,, && \Gamma_2=2^{[K/\eta]+1}\,,
\end{align*}
being $\eta$ the constant of Lemma~\ref{threebis}. We can again subdivide the possible cases for $\Omega$.
\case{I}{One has $\hat t=1$.}
In this case, by~(\ref{admissible}) one has that every $1<t\leq \bar t$ belongs either to $A$ or to $B$, thus by~(\ref{almostR_2}) $\bar t \leq C_7+1$; as a consequence, the choice $\Omh=\Om$ satisfies the requirements of the lemma. Indeed, while the right condition of~(\ref{estimateinterior}) is obviously true, the left one follows just noticing that
\[\begin{split}
W\big(\Omh,0,\om\big) &=
W\big(\Om,0,\om\big)
=\tau(\Om,\om)-\tau(\Om,\om-\mh)+W\Big(\Om,0,\om-\mh\Big)\\
&\leq \tau\Big(\Om,\om+\frac \mh 2\Big)-\tau(\Om,\om-\mh) +W\Big(\Om,0,\om-\mh\Big)
= 2\bar t+ W\Big(\Om,0,\om-\mh\Big)\\
&\leq 2 \big( C_7+1\big)+W\Big(\Om,0,\om-\mh\Big)\,.
\end{split}\]
\case{II}{One has $\hat t>1$.}
In this case, again by~(\ref{admissible}) we know that $A\cup B$ contains the whole segment $(\hat t,\bar t)$, thus $\bar t \leq \hat t + C_7$ by~(\ref{almostR_2}). If we choose $t^\star \in (\hat t-1,\hat t)$ for which condition~(3) holds and define $U_1:= \Omh(t^\star)$, we know by construction that $|U_1|=1$ and $\lambda_i(U_1)< \lambda_i(\Om)$ for every $1\leq i \leq k$. As in the proof of Lemma~\ref{lemmatail}, the fact that $|\Omt(t^\star)|\geq 1-\frac 32\mh \geq 1/2$ ensures that ${\rm diam} \big(\pi_p(U_1)\big)\leq 2 \, {\rm diam} \big(\pi_p(\Omega)\big)$ for each $2\leq p \leq N$. On the other hand, concerning the width of $U_1$, recalling the definition of $\Omt(t^\star)$ and observing that $(\om-\mh)\big| \Omt(t^\star) \big| < \om -\mh$ one finds
\[\begin{split}
W\Big(U_1, 0, \om-\mh\Big) &=
W\Big(\Omh(t^\star), 0, \om-\mh\Big)
=\big| \Omt(t^\star) \big|^{-\frac 1N}W\Big(\Omt(t^\star), 0, (\om-\mh)\big| \Omt(t^\star) \big|\Big)\\
&\leq 2\bigg(\tau\Big(\Omt(t^\star), (\om-\mh)\big| \Omt(t^\star) \big|\Big)-\tau\Big(\Omt(t^\star), 0\Big)\bigg)\\
&=2\bigg(\tau\Big(\Om, (\om-\mh)\big| \Omt(t^\star) \big|\Big)-\tau\Big(\Om, 0\Big)\bigg)
=2 W\Big(\Om, 0, (\om-\mh)\big| \Omt(t^\star) \big|\Big)\\
&\leq 2 W\Big(\Om, 0, \om-\mh\Big)\,.
\end{split}\]
Summarizing, we have found that
\begin{align}\label{thistrueint}
\lambda_i(U_1)< \lambda_i(\Omega)\,,
&&\frac{{\rm diam} \big(\pi_p(U_1)\big)}{{\rm diam} \big(\pi_p(\Omega)\big)} \leq 2 \,,&&
\frac{W\big(U_1,0,\om-\mh\big)}{W\big(\Om,0,\om-\mh\big)} \leq 2\,.
\end{align}
To conclude the inspection of the validity of~(\ref{estimateinterior}), we will again need to consider separately two subcases.

\case{IIa}{One has $\hat t>1$ and $m(t^\star)\leq \mh/2$.}
In this case, we can quickly observe that
\begin{equation}\label{ifcaseIIa}\begin{split}
W\big(U_1,0,\om\big)&=
\big| \Omt(t^\star) \big|^{-\frac 1N}W\Big(\Omt(t^\star), 0, \om\big| \Omt(t^\star) \big|\Big)
\leq 2W\Big( \Omt(t^\star), 0, \om + \frac \mh 2 - \big| \Om^-(t^\star)\big|\Big) \\
&\leq 2W\big(\Om, 0 , \om-\mh\big) + 4\big(\bar t - t^\star+1\big)
\leq  2W\big(\Om, 0 , \om-\mh\big) + 4\big(C_7 + 2\big)\,,
\end{split}\end{equation}
and, together with~(\ref{thistrueint}), this concludes the proof of~(\ref{estimateinterior}) and of the lemma with the choice $\Omh=U_1$.

\case{IIb}{One has $\hat t>1$ and $m(t^\star)>\mh/2$.}
Let us conclude with this last case. By Lemma~\ref{threebis}, in place of~(\ref{thistrueint}) we have then
\begin{align*}
\lambda_i(U_1)< \lambda_i(\Omega)-\eta\,,
&&\frac{{\rm diam} \big(\pi_p(U_1)\big)}{{\rm diam} \big(\pi_p(\Omega)\big)} \leq 2 \,,&&
\frac{W\big(U_1,0,\om-\mh\big)}{W\big(\Om,0,\om-\mh\big)} \leq 2\,,
\end{align*}
which still does not guarantee the validity of~(\ref{estimateinterior}). However, we can apply the construction above to $U_1$: if $U_1$ is in Case~I, then the choice $\Omh=U_1$ concludes the proof; otherwise, there exists an open set $U_2$ of unit measure satisfying
\begin{align}\label{thisU_2}
\lambda_i(U_2)< \lambda_i(\Omega)-\eta\,,
&&\frac{{\rm diam} \big(\pi_p(U_2)\big)}{{\rm diam} \big(\pi_p(\Omega)\big)} \leq 4 \,,&&
\frac{W\big(U_2,0,\om-\mh\big)}{W\big(\Om,0,\om-\mh\big)} \leq 4\,.
\end{align}
If $U_1$ is in Case~IIa then~(\ref{ifcaseIIa}) gives
\[
W\big(U_2,0,\om\big)\leq 2W\big(U_1, 0 , \om-\mh\big) + 4\big(C_7 + 2\big)
\leq 4 W\big(\Om, 0 , \om-\mh\big) + 4\big(C_7 + 2\big)\,,
\]
so the choice $\Omh=U_2$ concludes the proof. Instead, if $U_1$ is in Case~IIb then the first inequality of~(\ref{thisU_2}) becomes $\la_i(U_2)<\la_i(\Om)-2\eta$ for every $1\leq i \leq k$. The obvious iteration ensures us that, if the proof has not been obtained after $\ell$ steps, then there must be some open set $U_\ell$ satisfying
\begin{align*}
\lambda_i(U_\ell)< \lambda_i(\Omega)-\ell \eta\,, &&
\frac{{\rm diam} \big(\pi_p(U_\ell)\big)}{{\rm diam} \big(\pi_p(\Omega)\big)} \leq 2^\ell \,,&&
\frac{W\big(U_\ell,0,\om-\mh\big)}{W\big(\Om,0,\om-\mh\big)} \leq 2^\ell\,.
\end{align*}
Since this is not possible for $\ell>K/\eta$, the iteration must stop at some $\ell\leq [K/\eta]$, and thus we conclude the proof thanks to the choice of $\Gamma_2$.
\end{proof}

\subsection{Proof of Proposition~\ref{allhere}}

We are finally in position to give the proof of Proposition~\ref{allhere}, which is now a simple consequence of Lemma~\ref{lemmatail} and Lemma~\ref{lemmainterior}.

\begin{proof}[Proof of Proposition~\ref{allhere}]
Let us pick a generic open set $\Omega$ with $\lambda_k(\Omega)\leq K$. Applying Lemma~\ref{lemmatail} to $\Omega$, we find a set $E_1$ with
\begin{align*}
\la_i(E_1)\leq \la_i(\Om)\,, &&
W(E_1,0,\mh) \leq R_1\,, &&
\frac{{\rm diam} \big(\pi_p(E_1)\big)}{{\rm diam} \big(\pi_p(\Om)\big)} \leq \Gamma_1 \,,
\end{align*}
for every $2 \leq p \leq N$. Then, we apply Lemma~\ref{lemmainterior} to $E_1$ with $\om=2\mh$ finding $E_2$ which satisfies
\begin{align*}
\la_i(E_2)\leq \la_i(\Om)\,, &&
W(E_2,0,2\mh) \leq R_2 + \Gamma_2 R_1\,, &&
\frac{{\rm diam} \big(\pi_p(E_2)\big)}{{\rm diam} \big(\pi_p(\Om)\big)} \leq \Gamma_2\Gamma_1 \,.
\end{align*}
Iterating, for any $\ell\geq 3$ such that $\ell\mh\leq 1- \frac \mh 2$ we apply Lemma~\ref{lemmainterior} to $E_{\ell-1}$ with $\om=\ell\mh$ finding $E_\ell$ such that
\begin{align*}
\la_i(E_\ell)\leq \la_i(\Om)\,, &&
W(E_\ell,0,\ell\mh) \leq R_2 \frac{\Gamma_2^{\ell-1}-1}{\Gamma_2-1}+ \Gamma_2^{\ell-1} R_1\,, &&
\frac{{\rm diam} \big(\pi_p(E_\ell)\big)}{{\rm diam} \big(\pi_p(\Om)\big)} \leq \Gamma_2^{\ell-1}\Gamma_1 \,.
\end{align*}
Possibly applying a last time Lemma~\ref{lemmainterior} with $\om=1-\mh$, we have then found an open set $E$ satisfying
\begin{align*}
\la_i(E)\leq \la_i(\Om)\,, &&
W(E,0,1-\mh) \leq R_2 \frac{\Gamma_2^{[1/\mh]-1}-1}{\Gamma_2-1}+ \Gamma_2^{[1/\mh]-1} R_1 , &&
\frac{{\rm diam} \big(\pi_p(E)\big)}{{\rm diam} \big(\pi_p(\Om)\big)} \leq \Gamma_2^{[1/\mh]-1}\Gamma_1 .
\end{align*}
Calling $E'$ the set obtained by reflecting $E$ with respect to the plane $\{x=0\}$, the above estimates become
\begin{align*}
\la_i(E')\leq \la_i(\Om)\,, &&
W(E',\mh,1) \leq R_2 \frac{\Gamma_2^{[1/\mh]-1}-1}{\Gamma_2-1}+ \Gamma_2^{[1/\mh]-1} R_1\,, &&
\frac{{\rm diam} \big(\pi_p(E')\big)}{{\rm diam} \big(\pi_p(\Om)\big)} \leq \Gamma_2^{[1/\mh]-1}\Gamma_1 \,,
\end{align*}
so that applying once again Lemma~\ref{lemmatail} to $E'$ we find a set $F_1$ satisfying
\begin{align*}
\la_i(F_1)\leq \la_i(\Om)\,, && {\rm diam} \big(\pi_1(F_1)\big)=W(F_1,0,1) \leq R_3\,, &&
\frac{{\rm diam} \big(\pi_p(F_1)\big)}{{\rm diam} \big(\pi_p(\Om)\big)} \leq \Gamma_3 \,,
\end{align*}
having set
\begin{align*}
R_3 := R_1+\Gamma_1R_2 \frac{\Gamma_2^{[1/\mh]-1}-1}{\Gamma_2-1}+ \Gamma_1\Gamma_2^{[1/\mh]-1} R_1\,, && \Gamma_3 := \Gamma_2^{[1/\mh]-1}\Gamma_1^2\,.
\end{align*}
We can now repeat the whole construction, using as starting set $F_1$ in place of $\Omega$, and using the second coordinate in place of the first one. We will end up with a set $F_2$ with
\begin{align*}
\la_i(F_2)\leq \la_i(F_1)\leq \la_i(\Om)\quad  \forall \,1\leq i \leq k\,, &&
{\rm diam} \big(\pi_2(F_2)\big)\leq R_3\,,
\end{align*}
and such that for every $p\neq 2$ it is ${\rm diam} \big(\pi_p(F_2)\big)\leq \Gamma_3\,{\rm diam} \big(\pi_p(F_1)\big) $. In particular, choosing $p=1$ we discover that ${\rm diam} \big(\pi_1(F_2)\big)\leq \Gamma_3 R_3$. We have now to iterate also this argument: for any $3\leq j \leq N$ we repeat the above construction starting from $F_{j-1}$ and using the $j$-th coordinate in the whole procedure, obtaining a set $F_j$ which satisfies
\begin{align*}
\la_i\big(F_j\big)\leq \la_i(\Om)\quad \forall 1\leq i \leq k\,, && {\rm diam} \big(\pi_p(F_j)\big)\leq \Gamma_3^{j-p} R_3\quad \forall\, 1 \leq p \leq j \,.
\end{align*}
The thesis is then finally obtained by defining $\Omh:=F_N$, being $R:= \Gamma_3^{N-1} R_3$.
\end{proof}

\section{Proof of Theorem~\mref{bounded}\label{sec3}}

This last section is devoted to the proof of Theorem~\mref{bounded}. For the ease of presentation, we will begin with a couple of technical lemmas, then passing to the proof of the theorem. It is important to observe that the core of our construction, namely Lemma~\ref{startingcase}, is in fact an easy consequence of the well-known result by Ashbaugh--Benguria in~\cite{AB}, which states that the ratio $\lambda_2/\lambda_1$ is bounded. However, we prefer to give a formal proof of this lemma to keep the present paper self-contained.\par
The first simple step of our construction states that functions with bounded Rayleigh quotients cannot concentrate too much on small regions.

\begin{lemma}\label{smallrho}
For every $m\in (0,1]$ and $K>0$ there exists $\rho=\rho(m,K,N)>0$ such that the following holds. Let $u\in W^{1,2}(\R^N)$ with
\begin{align*}
\int_{\R^N} u^2 = 1\,, && \int_{\R^N} |Du|^2 \leq K\,.
\end{align*}
Then for every cube $Q\subseteq \R^N$ with half-side $\rho$ one has
\[
\int_Q u^2 \leq m\,.
\]
\end{lemma}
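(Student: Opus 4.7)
\smallskip
\noindent\textbf{Proof plan.} The intuition is that an $H^1$-bound prevents $u$ from concentrating too much mass in a small set, because concentration on a thin region would require a large gradient (to reach zero on the boundary) or large values that blow up Lebesgue norms higher than $L^2$. The right tool is thus a sub-critical Lebesgue embedding combined with Hölder's inequality on $Q$.

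First, I would obtain a uniform bound $\|u\|_{L^p(\R^N)} \leq C(K,N)$ for some exponent $p > 2$ depending only on $N$. For $N \geq 3$ this is the Sobolev embedding $W^{1,2}(\R^N) \hookrightarrow L^{2^*}(\R^N)$ with $2^* = 2N/(N-2)$, which gives
\[
\|u\|_{L^{2^*}(\R^N)} \leq C_N \|Du\|_{L^2(\R^N)} \leq C_N \sqrt{K}\,.
\]
For $N=2$ one cannot use $L^\infty$, but the Gagliardo--Nirenberg interpolation inequality
\[
\|u\|_{L^4(\R^2)}^2 \leq C\, \|u\|_{L^2(\R^2)}\,\|Du\|_{L^2(\R^2)}
\]
provides the same kind of control with $p=4$. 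In either case one obtains $\|u\|_{L^p(\R^N)} \leq C(K,N)$ for some $p=p(N) > 2$.

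Second, I would apply Hölder's inequality on the cube $Q$ with exponents $p/2$ and $p/(p-2)$:
\[
\int_Q u^2 \leq \|u\|_{L^p(Q)}^{2}\,|Q|^{\,1 - 2/p} \leq C(K,N)\,|Q|^{\,1-2/p}\,.
\]
Since $1-2/p > 0$ and $|Q| = (2\rho)^N$, the right-hand side tends to $0$ as $\rho \to 0$; hence it suffices to pick $\rho = \rho(m,K,N)$ small enough to make the right-hand side not exceed $m$. An explicit choice can easily be extracted from the inequality above.

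I do not expect any serious obstacle in this argument; the only subtle point is that $N=2$ must be handled separately from $N\geq 3$ because the Sobolev embedding into $L^{2^*}$ degenerates there. Once both cases are unified under a single sub-critical Lebesgue exponent, the rest is just Hölder. Note that the constant $\rho$ so produced depends only on $m$, $K$, $N$, as required, and the argument does not use anywhere that $u$ vanishes on some boundary, so it applies to any $W^{1,2}(\R^N)$ function with the two normalizations stated.
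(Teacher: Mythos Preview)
Your argument is correct and complete. The Sobolev inequality for $N\ge 3$ and the Ladyzhenskaya/Gagliardo--Nirenberg inequality for $N=2$ give a uniform $L^p$ bound with $p>2$, and the H\"older step then yields $\int_Q u^2\le C(K,N)(2\rho)^{N(1-2/p)}$, which can be made $\le m$ by an explicit choice of $\rho$.

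This is a genuinely different route from the paper's. The paper argues by contradiction: assuming a sequence $u_n$ with $\int_{Q_{1/n}}u_n^2\ge m$, it extracts a weak $W^{1,2}$ limit $u$ and uses Rellich compactness on each fixed cube $Q_\eps$ to get $\int_{Q_\eps}u^2\ge m$ for every $\eps>0$, which is absurd. That argument is soft and short but non-constructive---no explicit $\rho$ emerges. Your approach is direct and quantitative: it produces an explicit $\rho(m,K,N)$ and, as you note, even avoids any boundary-vanishing hypothesis. The only (minor) price is the need to split the cases $N\ge 3$ and $N=2$; the paper's compactness argument is dimension-agnostic.
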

\begin{proof}
Suppose that the claim is not true. Then there exists a sequence $\{u_n\}\subseteq W^{1,2}(\R^N)$ satisfying
\begin{align}\label{asspt}
\int_{\R^N} u_n^2 = 1\,, && \int_{\R^N} |Du_n|^2 \leq K\,, && \int_{Q_{1/n}} u_n^2 \geq m\,,
\end{align}
being $Q_r=[-r,r]^N$ the cube of half-side $r$ centered at the origin. By definition, this sequence is bounded in $W^{1,2}(\R^N)$, hence up to a subsequence we have that $u_n$ weakly converges to some function $u\in W^{1,2}(\R^N)$. In particular, for any $\eps>0$, $u_n$ strongly converges to $u$ in $L^2(Q_\eps)$, so that, thanks to~(\ref{asspt}), one has $\int_{Q_\eps} u^2 \geq m$. Since this is absurd, the claim follows.
\end{proof}

The second lemma, which is the core of our proof of Theorem~\mref{bounded}, ensures that every set with bounded first eigenvalue can be split into two subregions, each of them having first eigenvalue not too large.

\begin{lemma}\label{startingcase}
For every $K>0$ there exists $K'=K'(K,N)$ such that, if $\Omega$ is an open subset of $\R^N$ with $\lambda_1(\Omega)\leq K$, then there are two disjoint open subsets $\Omega_1,\, \Omega_2$ of $\Omega$ with $\lambda_1(\Omega_i) \leq K'$ for $i=1,\,2$.
\end{lemma}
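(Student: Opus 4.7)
The plan is to exploit the first eigenfunction (or a near-minimizer) $u$ as a probability density $u^2$ on $\R^N$, use Lemma~\ref{smallrho} to rule out concentration in a small cube, then find a coordinate direction in which the mass is genuinely spread out, and perform a soft cut of $\Omega$ perpendicular to that direction, producing test functions on two disjoint open subsets.

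\textbf{Step 1: find a spreading coordinate.} Pick $u\in W^{1,2}_0(\Omega)$ with $\int u^2=1$ and $\int|\nabla u|^2\leq K+1$ (a near-minimizer of the Rayleigh quotient is enough, which sidesteps the fact that $\lambda_1$ may not be attained), and extend by zero so $u\in W^{1,2}(\R^N)$. Apply Lemma~\ref{smallrho} with $m=1/2$ to get $\rho_0=\rho_0(K,N)>0$ such that every cube of half-side $\rho_0$ carries at most half of the $u^2$-mass. For each coordinate $j$, define the monotone $F_j(t):=\int_{\{x_j\leq t\}}u^2$ and let $[t_L^{(j)},t_R^{(j)}]$ be the $(\alpha,1-\alpha)$ inter-quantile interval with $\alpha:=1/(8N)$. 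A union bound yields
\[
\int_B u^2\geq 1-2N\alpha=\tfrac{3}{4},\qquad B:=\prod_{j=1}^N[t_L^{(j)},t_R^{(j)}].
\]
If every edge of $B$ had length $\leq 2\rho_0$, then $B$ would be contained in a cube of half-side $\rho_0$, forcing $\int_B u^2\leq 1/2$, a contradiction. Hence some direction, WLOG $x_1$, satisfies $t_R^{(1)}-t_L^{(1)}>2\rho_0$.

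\textbf{Step 2: soft cut and Rayleigh quotient.} Set the disjoint open subsets $\Omega_1:=\Omega\cap\{x_1<t_L^{(1)}+\rho_0\}$ and $\Omega_2:=\Omega\cap\{x_1>t_R^{(1)}-\rho_0\}$. Take Lipschitz cutoffs $\eta_1,\eta_2$ depending only on $x_1$, with $\eta_1\equiv 1$ on $\{x_1\leq t_L^{(1)}\}$, $\eta_1\equiv 0$ on $\{x_1\geq t_L^{(1)}+\rho_0\}$ (symmetric for $\eta_2$), and $|\nabla\eta_i|\leq 1/\rho_0$. Then $v_i:=u\eta_i\in W^{1,2}_0(\Omega_i)$ and
\[
\int v_i^2\geq \alpha,\qquad \int|\nabla v_i|^2\leq 2\int|\nabla u|^2+2\int u^2|\nabla\eta_i|^2\leq 2(K+1)+2/\rho_0^2,
\]
which gives $\lambda_1(\Omega_i)\leq \rc(v_i,\Omega_i)\leq K':=\alpha^{-1}\bigl(2K+2+2/\rho_0^2\bigr)$, depending only on $K$ and $N$.

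\textbf{Main obstacle.} The non-routine part is Step~1: converting the cube-based Lemma~\ref{smallrho} into a \emph{directional} spreading statement. The trick is the inter-quantile box of $u^2$ together with a suitable choice of quantile level $\alpha$ strictly below $1/(4N)$, so that the union bound $(1-2N\alpha)$ strictly exceeds the cube-mass bound $m=1/2$. Step~2 is then a standard truncation estimate: the buffer width $\rho_0$ produced by Step~1 controls $|\nabla\eta_i|$, while the quantile bound $\int v_i^2\geq\alpha$ keeps the Rayleigh denominator away from zero.
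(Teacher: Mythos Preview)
Your proof is correct and follows essentially the same route as the paper's: invoke Lemma~\ref{smallrho} with $m=1/2$, use a union bound over the coordinate quantiles of $u^2$ to find a direction in which the mass is spread over width at least $2\rho$, and then perform a soft cut in that direction with a linear cutoff of width $\rho$. The only differences are cosmetic --- you take quantile level $\alpha=1/(8N)$ rather than the paper's $1/(4N)$ (which sidesteps a borderline equality in the contradiction), you work with a near-minimizer rather than the first eigenfunction (a valid precaution, since $\lambda_1$ need not be attained on an unbounded $\Omega$), and your $\Omega_1,\Omega_2$ leave a gap in the middle whereas the paper bisects $\Omega$ at the midpoint $(t^-+t^+)/2$.
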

\begin{proof}
We start applying Lemma~\ref{smallrho} with $K$ and with $m=1/2$, thus getting a positive number $\rho$. Let then $\Omega\subseteq \R^N$ be an open set with $\lambda_1(\Omega)\leq K$, and let $u\in W^{1,2}_0(\Omega)$ be a first eigenfunction of $\Omega$ with unit $L^2$ norm. Extending $u$ by $0$ outside $\Omega$, we have then by definition
\begin{align}\label{firstprop}
\int_{\R^N} u^2 = \int_\Omega u^2 = 1 \,, && \int_{\R^N} |Du|^2 = \int_\Omega |Du|^2 \leq K \,.
\end{align}
Let now $t^-<t^+$ be identified by
\begin{equation}\label{deft-}
\int_{\Omega^l_{t^-}} u^2 = \int_{\Omega^r_{t^+}} u^2 = \frac{1}{4N}\,.
\end{equation}
We claim that it is possible to assume
\begin{equation}\label{admass}
t^+ - t^- \geq 2 \rho\,.
\end{equation}
In fact, if it is not so, this means that there is a vertical stripe of width $2\rho$ out of which the squared $L^2$ norm of $u$ is less than $1/(2N)$ (by ``vertical'' we mean orthogonal to $e_1$). If this happens for every direction $e_1,\, e_2,\, \dots \,,\, e_N$, the intersection of the corresponding stripes is a square of half-side $\rho$ out of which the squared $L^2$ norm of $u$ is less than $1/2$. Since this is in contradiction with Lemma~\ref{smallrho}, we obtain the validity of~(\ref{admass}), up to a rotation.\par
Let us now call $t=(t^+ + t^-)/2$, define $\Omega_1 = \Omega^l_t$ and $\Omega_2 = \Omega^r_ t$, and let $\ut\in W^{1,2}(\Omega_1)$ be defined as
\[
\ut(x,y) := \left\{\begin{array}{ll}
u(x,y)  &\hbox{for $x\leq t -\rho$}\,, \\[5pt]
\bal\frac{t -x}{\rho}\eal\, u(x,y) \qquad &\hbox{for $t - \rho\leq x \leq t$}\,. \\
\end{array}\right.
\]
Since $u\in W^{1,2}_0(\Omega)$, it is clear that $\ut\in W^{1,2}_0(\Omega_1)$. Moreover, writing $Du=(D_1 u,\, D_y u)$, one has
\[
D\ut(x,y) = \bigg( \frac{t-x}{\rho} \,D_1 u(x,y) -\frac 1\rho\, u(x,y), \frac{t-x}{\rho}\, D_y u (x,y)\bigg)
\]
for every $(x,y)\in\Omega_1$ with $x\geq t-\rho$. As a consequence, minding~(\ref{firstprop}) one gets
\begin{equation}\label{estimDu}
\int_{\Omega_1} |D\ut|^2 \leq 2\int_{\Omega_1} |Du|^2 + \frac 2{\rho^2} \int_{\Omega_1} u^2 \leq 2K + \frac 2{\rho^2}\,.
\end{equation}
On the other hand, recalling~(\ref{admass}) and~(\ref{deft-}) it is
\begin{equation}\label{estimu}
\int_{\Omega_1} \ut^2 \geq \int_{\Omega^l_{t^-}} u^2 = \frac{1}{4N}\,.
\end{equation}
Putting together~(\ref{estimDu}) and~(\ref{estimu}) one immediately obtains
\[
\lambda_1(\Omega_1) \leq \rc(\ut,\Omega_1) = \frac{\bal\int_{\Omega_1} |D\ut|^2\eal}{\bal\int_{\Omega_1} \ut^2\eal}
\leq 8N \bigg( K + \frac 1{\rho^2}\bigg)\,.
\]
Finally, we can set $K' = 8N\big( K + 1/\rho^2\big)$: since we have shown that $\lambda_1(\Omega_1)\leq K'$, and since by symmetry it is also $\lambda_1(\Omega_2)\leq K'$, the thesis follows.
\end{proof}

We are now in position to prove a first boundedness result of $\lambda_k$ in terms of $\lambda_1$, from which Theorem~\mref{bounded} will then readily follow.

\begin{lemma}\label{exthm}
For every $K>0$ there exists $M'=M'(k,K,N)>0$ such that, for all open sets $\Om\subseteq\R^N$, if $\la_1(\Om)\leq K$ then $\la_k(\Om)\leq M'$.
\end{lemma}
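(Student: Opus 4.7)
The plan is to iterate Lemma~\ref{startingcase} enough times to produce $k$ pairwise disjoint open subsets of $\Omega$, each having first eigenvalue bounded by a constant depending only on $k$, $K$ and $N$, and then to use the min-max principle (Theorem~\ref{chareigen}) to transfer this bound to $\lambda_k(\Omega)$.

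First, define a sequence of constants by $K_0 := K$ and, for every $n\geq 0$, $K_{n+1} := K'(K_n, N)$, where $K'(\cdot,N)$ is the function provided by Lemma~\ref{startingcase}. Choose $n_0 := \lceil \log_2 k\rceil$ and set $M' := K_{n_0}$; by construction $M'$ depends only on $k$, $K$ and $N$. Now I would argue by induction that for every $0\leq n \leq n_0$ there exist $2^n$ pairwise disjoint open subsets $\Omega^{(n)}_1,\dots,\Omega^{(n)}_{2^n}$ of $\Omega$ with $\lambda_1(\Omega^{(n)}_j)\leq K_n$. The base case $n=0$ is just $\Omega^{(0)}_1 = \Omega$ and the hypothesis $\lambda_1(\Omega)\leq K$. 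For the inductive step, I apply Lemma~\ref{startingcase} (with the constant $K_n$) separately to each $\Omega^{(n)}_j$, producing two disjoint open subsets of it, each with first eigenvalue at most $K_{n+1}$; together these give $2^{n+1}$ pairwise disjoint open subsets of $\Omega$, as required.

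At step $n_0$ we have at least $k$ disjoint open sets $\Omega_1,\dots,\Omega_k \subseteq \Omega$ with $\lambda_1(\Omega_j)\leq M'$. For each $j$, let $v_j\in W^{1,2}_0(\Omega_j)$ be a first eigenfunction with $\int v_j^2 = 1$, so that $\int |Dv_j|^2 = \lambda_1(\Omega_j)\leq M'$. Extending each $v_j$ by zero, we obtain $k$ functions in $W^{1,2}_0(\Omega)$ with pairwise disjoint supports; in particular they are linearly independent and their $L^2$ and $W^{1,2}_0$ cross-products vanish. Let $K_k$ denote the $k$-dimensional subspace they span: for any $w=\sum_{j=1}^k c_j v_j\in K_k\setminus\{0\}$,
\[
\rc(w,\Omega) = \frac{\sum_{j=1}^k c_j^2 \int_\Omega |Dv_j|^2}{\sum_{j=1}^k c_j^2 \int_\Omega v_j^2} = \frac{\sum_j c_j^2 \lambda_1(\Omega_j)}{\sum_j c_j^2} \leq \max_{1\leq j \leq k}\lambda_1(\Omega_j) \leq M'.
\]
By the min-max principle (Theorem~\ref{chareigen}) this gives $\lambda_k(\Omega)\leq M'$, concluding the proof.

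The only delicate point is that $M'$ is defined as $K_{n_0}$ where the recursion $K_{n+1}=K'(K_n,N)$ uses the constant coming from Lemma~\ref{startingcase}, which in turn depends on $\rho=\rho(1/2,K_n,N)$ from Lemma~\ref{smallrho}; since the number of iterations is $\lceil\log_2 k\rceil$, which depends only on $k$, the final constant $M'$ depends only on $k$, $K$ and $N$ as claimed. No obstacle of substance arises here: everything is an explicit, finite iteration of results already established.
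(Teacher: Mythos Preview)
Your proof is correct and follows essentially the same approach as the paper: iterate Lemma~\ref{startingcase} $\lceil\log_2 k\rceil$ times to obtain at least $k$ pairwise disjoint open subsets with uniformly bounded first eigenvalue, then use their first eigenfunctions (extended by zero) as a test subspace in the min-max principle. The only cosmetic difference is that the paper keeps all $2^j$ subsets and bounds $\lambda_{2^j}(\Omega)$ before noting $\lambda_k(\Omega)\leq\lambda_{2^j}(\Omega)$, whereas you select $k$ of them directly.
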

\begin{proof}
Let us start by setting $K_1=K$, and then, applying Lemma~\ref{startingcase}, we let recursively $K_{l+1}=K'(K_l,N)$ for every $l\geq 1$. Finally, we define $M'=K_{j+1}$, where $j$ is the smallest natural number such that $2^j \geq k$. We will show the claim of the theorem with such constant $M'$.\par
To do so, we pick any open set $\Omega$ with $\lambda_1(\Omega)\leq K=K_1$. Applying Lemma~\ref{startingcase} to $\Omega$ with constant $K_1$, we find two disjoint open sets $\Omega_1,\,\Omega_2\subseteq\Omega$ with $\lambda_1(\Omega_i)\leq K'(K_1,N)=K_2$ for $i=1,\, 2$. Then, we can apply Lemma~\ref{startingcase} to $\Omega_1$ and $\Omega_2$ with constant $K_2$, finding four disjoint subsets $\Omega_{11},\, \Omega_{12},\, \Omega_{21},\, \Omega_{22}$ of $\Omega$, each of them with first eigenvalue smaller than $K_3$. Continuing with the obvious induction, we end up with $2^j$ disjoint open subsets of $\Omega$, say $\Omega^i$ for $1\leq i \leq 2^j$, having $\lambda_1(\Omega^i)\leq K_{j+1}=M'$ for each $i$.\par
To conclude the thesis, it is thus enough to show that
\begin{equation}\label{tofinish}
\lambda_k(\Omega) \leq \lambda_{2^j} (\Omega) \leq \max \Big\{ \lambda_1(\Omega^i):\, 1\leq i \leq 2^j\Big\}\leq M'\,,
\end{equation}
and in fact only the second inequality is to be shown, being the first and the last true by construction.\par
To get~(\ref{tofinish}), for every $1\leq i\leq 2^j$ let $u_i$ be a first eigenfunction of $\Omega^i$, again extended by $0$ on $\Omega\setminus \Omega^i$, so that
\begin{align*}
\int_{\Omega} u_i^2 = \int_{\Omega^i} u_i^2 = 1\,, &&
\int_{\Omega} |Du_i|^2 = \int_{\Omega^i} |Du_i|^2 = \lambda_1(\Omega^i) \leq M'\,,
\end{align*}
and then $\rc(u_i,\Omega) \leq M'$. Observe that the functions $u_i$ are mutually orthogonal (both in the $L^2$ and in the $W^{1,2}$ sense) by construction, since they are supported on disjoint sets. Hence, the linear subspace $K_{2^j}$ of $W^{1,2}_0(\Omega)$ spanned by the functions $u_i$ for $1\leq i \leq 2^j$ is $2^j$-dimensional. Thanks to Theorem~\ref{chareigen}, to prove~(\ref{tofinish}) it is enough to show that $\rc(w)\leq \max\big\{ \lambda_1(\Omega^i):\, 1\leq i \leq 2^j\big\}$ for every $w\in K_{2^j}$. And in fact, writing the generic function $w\in K_{2^j}$ as $w = \sum \beta_i u_i$, by the orthogonality of the different $u_i$ one has clearly
\[\begin{split}
\rc(w,\Omega)&=\frac{\bal\int_\Omega\Big|\sum\beta_iDu_i\Big|^2\eal}{\bal\int_\Omega \Big(\sum\beta_iu_i\Big)^2\eal}
= \frac{\bal \sum \beta_i^2 \int_\Omega \big| Du_i\big|^2\eal}{\bal \sum \beta_i^2 \int_\Omega   u_i^2\eal}
= \frac{\bal \sum \beta_i^2 \,\rc(u_i,\Omega) \int_\Omega   u_i^2\eal}{\bal \sum \beta_i^2 \int_\Omega   u_i^2\eal}\\
&= \frac{\bal \sum \beta_i^2 \,\lambda_1(\Omega^i) \int_\Omega   u_i^2\eal}{\bal \sum \beta_i^2 \int_\Omega   u_i^2\eal}
\leq \max \Big\{ \lambda_1(\Omega^i):\, 1\leq i \leq 2^j\Big\}\,.
\end{split}\]
As noticed before, this gives the validity of~(\ref{tofinish}), hence the proof is concluded.
\end{proof}

To obtain Theorem~\mref{bounded}, we now only need a trivial rescaling argument.

\begin{proof}[Proof of Theorem~\mref{bounded}]
First of all notice that, by density, it is admissible to consider only the case of the open sets. We  apply Lemma~\ref{exthm} with $K=1$, so defining $M:=M'(k,1,N)$. We will prove Theorem~\mref{bounded} with such $M$. Let $\Omega\subseteq \R^N$ be an open set, and apply the rescaling formula~(\ref{rescaling}) choosing $\alpha=\lambda_1(\Omega)^{\frac 12}$, thus getting $\lambda_1(\alpha \,\Omega)=1$. By Lemma~\ref{exthm}, we derive $\lambda_k(\alpha\Omega)\leq M$, and then by~(\ref{rescaling}) again we find $\lambda_k(\Omega) = \alpha^2 \lambda_k (\alpha\,\Omega) \leq M \lambda_1(\Omega)$, thus the proof is concluded.
\end{proof}

\end{document}